\newtheorem{theorem}{Theorem}[section]
\newtheorem{corollary}[theorem]{Corollary}
\newtheorem{lemma}[theorem]{Lemma}
\newtheorem{proposition}[theorem]{Proposition}
\newtheorem{thmx}{Theorem}
\newtheorem{definition}[theorem]{Definition}
\newtheorem{remark}[theorem]{Remark}
\numberwithin{equation}{section}
\def\ga{\alpha}
   \def\CK{{\mathcal K}}
\def\R{\mathbb{R}}
\let\vp=\varphi
\let\ol=\overline
\def\O{\Omega}
\def\M{\mathcal{M}}
\def\R{\mathbb{R}}
\def\JJ{\mathcal{J}}
\def\MM{\mathcal{M}}
\def\XX{\mathcal{X}}
\def\lan{\langle}
\def\ran{\rangle}
\def\bs{\backslash}
\def\ol{\overline}
\def\sm{\setminus}
\def\wt{\widetilde}
\def\stst{\subset\subset}
\def\al{\alpha}
\def\ep{\epsilon}
\def\la{\lambda}
\def\si{\sigma}
\def\Om{\Omega}
\def\ga{\gamma}
\def\vp{\varphi}
\begin{document}


\title[Nonlocal dispersal operators of Neumann type]{Principal spectral theory of time-periodic nonlocal dispersal operators of Neumann type}

\thanks{The author was partially supported by Vietnam National Foundation for Science and Technology Development (NAFOSTED) under grant 101.02-2018.312.}

\author{Hoang-Hung Vo$^{*}$}
\address{Department of Mathematics and Applications, Saigon University, 273 An Duong Vuong st., Ward 3, Dist.5, Ho Chi Minh City, Viet Nam}
\email{vhhungkhtn@gmail.com}


\begin{abstract} 
In this communication, we prove some important limits of the principal eigenvalue for nonlocal  operator of Neumann type with respect to the parameters, which are significant in the understanding of dynamics of biological populations. We obtained a complete picture about limits of the principal eigenvalue in term of the large and small dispersal rate and dispersal range classified by "Ecological Stable Strategy" of persistence. This  solves some open problems  remainning in the series of work \cite{BCV1,ShXi15-2,ShenVo1}, in which we have to overcome the new difficulties comparing  to \cite{BCV1,ShXi15-2,ShenVo1} since principal eigenvalue of nonlocal Neumann operator is not monotone with respect to the domain.  The maximum principle for this type of operator   is also achieved in this paper.
\end{abstract}

\subjclass[2010]{Primary 35B50, 47G20; secondary 35J60}



\keywords{Nonlocal Neumann operator, principal eigenvalue, generalized principal eigenvalue, maximum principle}

\maketitle

\tableofcontents

\section{\bf Introduction}

The reaction-diffusion equation with nonlocal dispersal has become the subject of intensive research since the past decade not only because it is not only more mathematically challenging but it can also be used to describe many phenomena in the real world more precisely. In many biological systems, organisms can travel for some distance and the transition probability from one location to another usually depends upon the distance the organisms traveled. Such dispersal is referred to as nonlocal dispersal and is usually modeled by proper integral operators of Neumann type (see \cite{RDL,LHDGM}). In this paper, we are concerned with the following nonlocal dispersal operator of Neumann type

\begin{equation}\label{main-eqn-linear}
L[u](t,x):=-u_{t}(t,x)+\frac{D}{\si^{k}}\int_{\Om}J\left(\frac{x-y}{\si}\right)\frac{u(t,y)-u(t,x)}{\si^{N}}dy+a(t,x)u(t,x),\quad (t,x)\in\R\times\ol{\Om},
\end{equation}
where usually $D$ is the dispersal rate, $\sigma$ is referred the dispersal range and $k$ is the ecological stable
strategy (ESS). 

The nonlocal dispersal problem of Neumann type is the problem of intensive interest in the framework of reaction-diffusion equations since it has real applications in the natural science. Many  applications  have been studied in the nice works \cite{RDL,Hamilton,LHDGM,HL,HMKV}, which are modeled by the nonlocal dispersal of Neumann type equations. On the aspect of mathematical analysis, it is strongly linked to local reaction diffusion equations with Neumann boundary condition, which are established in the important works of Cortazar et al. \cite{Cor1}, Andrew et al. \cite{AMRT} for   parabolic operators, Ishii and Nakamura \cite{IN} for quasilinear elliptic operator. Moreover, another approximation of the spectrum of linear elliptic  by using  Galerkin--Fourier method has also been done by Andr\'es et al. \cite{AM}. The concept of ESS comes from games theory and goes back to the work of
Hamilton \cite{Hamilton} 1967 on the evolution of sex-ratio. Roughly speaking, an ecological stable
strategy is a strategy such that if most of the members of a population adopt it, there
is no €mutant strategy that would yield a higher reproductive fitness. In this framework
the strategies are compared using their relative pay-off.  This concept has been
recently used and adapted to investigate ecological stable strategies of dispersal in
several contexts: unconditional dispersal by Cosner and Lou \cite{CL},  Hambrock and Lou \cite{HL}, nonlocal dispersal by Berestycki et al. \cite{BCV2}, Hutson et al. \cite{HMKV}. The concept of ESS is also used to study the evolution of dispersal for biological species in the close/open advective environments in the series of work of Lam, Lou and Lutscher \cite{LLL,LL}. Before stating the main results, let us give the main assumptions in this paper.

Throughout this paper, we assume
\begin{itemize}
\item[\rm\bf(H1)] $\O\subset\R^{N}$ is a bounded and connected domain with smooth boundary and $D,\si>0$, $k\geq0$.

\item[\rm\bf(H2)] $J\in C(\R^{N})$ is nonnegative, continuous and  supported in $B_\gamma(0)$ for some $\gamma>0$, and satisfies $J(0)>0$ and $\int_{\R^{N}}J(x)dx=1$, where $B_\gamma(0)\subset\R^{N}$ is the open ball centered at $0$ with radius $\ga$. 

\item[\rm\bf(H3)] $a\in C_{T}(\R\times\ol{\Om})$ for some $T>0$, where
$$
C_{T}(\R\times\ol{\Om})=\left\{v\in C(\R\times\ol{\Om}):v(t+T,x)=v(t,x),\quad\forall (t,x)\in\R\times\ol{\Om}\right\}.
$$  
\end{itemize}

We  denote
$$ 
a_{T}(x)=\frac{1}{T}\int_{0}^{T}a(t,x)dx\quad\quad x\in\ol{\Om}.
$$

In this article, we focus on the following goals :

$\bullet$ We first study the effects of the dispersal rate and the dispersal range on $\la_{1}(-L)$. The study of asymptotic behaviors of the principal eigenvalue  plays an important role in the understanding the persistence of species in the inclement environments. For instance, as the dynamics of the population under the  phenomena of climate change, one can understand that small diffusion rate expresses the environment of the population is colder and large diffusion rate expresses the environment of the population is hotter.  As is known by Berestycki et al. \cite{BCV2} that the population modelled by Fisher-KPP type nonlinearity persists if and only if the principal eigenvalue of the linearized operator as (\ref{main-eqn-linear}) is strictly negative. Therefore, study the limits of the principal eigenvalue for small and large dispersal rate and the dispersal range is important to understand these phenomena.

$\bullet$ Second, we study the maximum principle for nonlocal operator of Neumann type, which is of independent interest and as an application of the principal eigenvalue of the operator.

To these aims, let us start with the definition of principal spectral point of $L$. For the sake of presentation, we define the  spaces $\XX$, $\XX^{+}$ and $\XX^{++}$  as follows:
\begin{equation*}
\begin{split}
\XX&=C^{1,0}(\R\times{\ol\Om})\cap C_{T}(\R\times\ol{\Om}),\\
\XX^{+}&=\left\{v\in\XX:v(t,x)\geq0,\,\,(t,x)\in\R\times{\ol\Om}\right\},\\
\XX^{++}&=\left\{v\in\XX:v(t,x)>0,\,\,(t,x)\in\R\times{\ol\Om}\right\},
\end{split}
\end{equation*}
where $C^{1,0}(\R\times\ol{\O})$ denotes the class of functions $C^1$ in $t$ and continuous in $x$.

\begin{definition}\label{eigenvalue}
The {\rm principal spectrum point} of $-L_{\O}$ is defined by
\begin{equation*}
\la_{1}(-L)=\inf\left\{\Re\la:\la\in\si(-L)\right\},
\end{equation*}
where $\si(-L)$ is the spectrum of $-L$. If $\la_{1}(-L)$ is an isolated eigenvalue of $-L$ with an eigenfunction in $\XX^{+}\bs\{0\}$, then it is called the {\rm principal eigenvalue} of $-L$.  
\end{definition}

To state the main results, we first recall the following results.

\begin{theorem}[\cite{RS12}]\label{prop-principal-e-Neumann}
Suppose {\bf(H1)}-{\bf(H3)}.
\begin{enumerate}
\item When $\la_{1}(-L)$ is a principal eigenvalue of $-L$, it is geometrically simple and has an eigenfunction in $\XX^{++}$.

\item If $\la\in\R$ is an eigenvalue of $-L$ associated with an eigenfunction in $\XX^{+}$, then 
$$
\la=\la_{1}(-L)<\la_{*}:=\min_{x\in\ol{\Om}}\left[\frac{D}{\si^{k}}\int_{\Om}J\left(\frac{x-y}{\si}\right)\frac{1}{\si^{N}}dy-a_{T}(x)\right]
$$ 
and $\la$ is the principal eigenvalue.

\item If $\la_{1}(-L)<\la_{*}$, then $\la_{1}(-L)$  is the principal eigenvalue of $-L$.
\end{enumerate}
\end{theorem}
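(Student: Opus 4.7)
My plan is to treat the three items in order, exploiting the splitting $-L = \partial_t + M - K_0$, where $(K_0 u)(t,x) = D\sigma^{-k-N}\int_{\Omega} J((x-y)/\sigma)\,u(t,y)\,dy$ is the nonnegative integral part and $M$ is multiplication by $c(x) - a(t,x)$ with $c(x) = D\sigma^{-k-N}\int_{\Omega} J((x-y)/\sigma)\,dy$. Observe that $K_0$ is compact on $\XX$ by Arzel\`a-Ascoli (continuity of $J$ yields equicontinuity of $K_0 u$ in $x$), and it is strongly positive in the sense that $(K_0 u)(t,x) > 0$ whenever $u \geq 0$ and $u(t,\cdot)$ is positive somewhere in $B_{\sigma\gamma}(x) \cap \Omega$, thanks to $J(0) > 0$.

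For (1), let $\phi \in \XX^{+}\setminus\{0\}$ be an eigenfunction at $\lambda := \lambda_1(-L)$. The equation reads $\phi_t = K_0\phi + (a - c + \lambda)\phi$. Suppose $\phi(t_0,x_0) = 0$; since $\phi(\cdot,x_0)$ is $C^1$, nonnegative and $T$-periodic with a zero at $t_0$, necessarily $\phi_t(t_0,x_0) = 0$, which forces $(K_0\phi)(t_0,x_0) = 0$. The positivity of $J$ near the origin combined with $\phi \geq 0$ then yields $\phi(t_0,y) = 0$ for $y$ in a neighborhood of $x_0$ in $\Omega$, and chaining such neighborhoods gives $\phi(t_0,\cdot) \equiv 0$ on all of $\Omega$ by connectedness. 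A Gronwall estimate on the linear evolution $\phi_t = K_0\phi + A\phi$, with $A := a - c + \lambda$ bounded, then propagates this zero to every $t$, contradicting $\phi \not\equiv 0$. Hence $\phi \in \XX^{++}$. Geometric simplicity follows by the sliding argument: for any real eigenfunction $\psi$ at $\lambda$, the ratio $\psi/\phi$ attains its minimum $\tau \in \R$ over $[0,T]\times\overline{\Omega}$; then $\psi - \tau\phi \in \XX^{+}$ is an eigenfunction touching zero, hence vanishes identically by the positivity step, so $\psi = \tau\phi$.

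For (2), let $\phi \in \XX^{+}\setminus\{0\}$ be an eigenfunction at a real $\lambda$; by the positivity argument of (1), $\phi \in \XX^{++}$. Dividing the eigenvalue equation by $\phi$ and integrating over one period, the $T$-periodicity annihilates the logarithmic derivative and yields
$$0 = T\,(a_T(x) - c(x) + \lambda) + \int_0^T \frac{(K_0\phi)(t,x)}{\phi(t,x)}\,dt.$$
Since $(K_0\phi)/\phi > 0$ pointwise, we obtain $\lambda < c(x) - a_T(x)$ for every $x \in \overline{\Omega}$, and minimizing in $x$ gives $\lambda < \lambda_*$. To conclude $\lambda = \lambda_1(-L)$, I invoke (3): since $\lambda_1(-L) \leq \lambda < \lambda_*$, item (3) furnishes a positive principal eigenfunction $\phi_1 \in \XX^{++}$ at $\lambda_1(-L)$, and the sliding argument applied to $\phi - \tau\phi_1$ (with $\tau$ the minimum of $\phi/\phi_1$) combined with the nonnegative gap $\lambda - \lambda_1(-L)$ forces $\phi$ and $\phi_1$ to be proportional, whence $\lambda = \lambda_1(-L)$.

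For (3), the existence statement is the technical heart. The plan is an abstract spectral/Krein-Rutman argument: since $K_0$ is compact on $\XX$, the essential spectrum of $-L$ agrees with that of $\partial_t + M$, and a direct Floquet-type computation identifies this essential spectrum as lying in $\{\Re\lambda \geq \lambda_*\}$ (the mean multiplier $c - a_T$ realizes $\lambda_*$ by its minimum). Consequently, any $\lambda_1(-L) < \lambda_*$ is an isolated point of the discrete spectrum, and a positive eigenfunction is produced via a Krein-Rutman / Nussbaum-type fixed-point theorem applied to the $T$-monodromy, or to the resolvent, on the cone $\XX^{+}$. The main obstacle is precisely this step: the cone $\XX^{+}$ has empty interior in $C^{1,0}$, so the standard solid-cone Krein-Rutman theorem does not apply directly; one must pass to a weaker topology where the cone is solid, or leverage the regularization induced by the compact part $K_0$ together with the smoothing from the time-transport $\partial_t$ to establish the strong positivity of the resolvent needed to extract a principal eigenfunction, which is the delicate analysis carried out in \cite{RS12}.
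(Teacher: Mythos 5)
This theorem is attributed to \cite{RS12} and is stated here without proof; the paper never argues it, it simply cites Rawal--Shen. So the comparison to make is against \cite{RS12}, not against anything in the present source.

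Your treatment of items (1) and (2) is sound and essentially in the spirit of \cite{RS12}. For (1), the touching-point mechanism (a zero of a nonnegative $C^1$ periodic function has vanishing $t$-derivative, which kills $K_0\phi$ at that point and propagates the zero set by $J(0)>0$ and connectedness, then to all times by uniqueness of the integro-ODE) is exactly the right idea, and sliding gives simplicity. For (2), integrating the logarithmic derivative over a period and using strict positivity of $(K_0\phi)/\phi$ does give $\lambda<\lambda_*$. Your step deducing $\lambda=\lambda_1(-L)$ is compressed and needs one more sentence: since $\phi$ and the principal eigenfunction $\phi_1$ solve eigen-equations at different levels, $w=\phi-\tau\phi_1$ is not an eigenfunction but satisfies $(L+\lambda_1)[w]=-(\lambda-\lambda_1)\phi\le 0$; at the touching point $(t_0,x_0)$ the same zero-derivative and $K_0$-positivity reasoning then forces $(\lambda-\lambda_1)\phi(t_0,x_0)\le 0$, and since $\phi>0$ and $\lambda\ge\lambda_1$ by definition, we get $\lambda=\lambda_1$. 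That closes the argument; your phrasing ``forces $\phi$ and $\phi_1$ to be proportional'' is a consequence rather than the mechanism.

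The real gap is item (3), which you yourself flag: you outline the Krein--Rutman strategy and correctly identify the obstruction (the cone $\XX^+$ has empty interior in $C^{1,0}$), but you do not resolve it, so this part is not a proof. Two further technical cautions. First, your claim that $K_0$ is compact on $\XX$ is doubtful as stated: $K_0$ smooths in $x$ but not in the $t$-derivative, so images of $C^{1,0}$-bounded sets are not obviously precompact in $C^{1,0}$. Rawal--Shen avoid this by working with the evolution family $\Phi(t,s)$ on $C(\overline\Omega)$ (compare Lemma~\ref{lem-characterization-08-30-18}), where compactness and strong positivity of the monodromy $\Phi(T,0)$ (for a power, via $J(0)>0$) can be established, and then deduce the principal eigenpair from the condition $\la_1(-L)<\la_*$ which separates the putative principal eigenvalue from the essential spectral radius. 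Second, the essential-spectrum identification you sketch needs the periodic multiplier computation to be carried out carefully, since the time-average $a_T$ (not $a$ itself) appears; this is where the quantity $\la_*$ built from $a_T$ comes from. In short: (1) and (2) are correct modulo small fill-ins and match the cited source's method; (3) remains a pointer to \cite{RS12} rather than an argument.
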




The asymptotic behaviours of the principal eigenvalue with respect to the parameters are very important in the understanding the global dynamics of a biolgical species population in the inclement environments. The practical meaning of a such study is well explained in the celebrated work of Y. Lou \cite{Lou} and the recent work of H. Berestycki et al. \cite{BCV2}. It has also attracted a lot of attentions of  the community of reaction-diffusion equations \cite{HeNi,HMKV,LHDGM,LamLou,LLL,LL,ShXi15-1,ShXi15-2,SLW,YLR}. However, the qualitative properties of the principal eigenvalue of nonlocal operator of Neumann type are rather less. Up to our knowledge, this is the first time, the limits with respect to the diffusion rate and diffusion range classified by the ESS, of the principal eigenvalue of time periodic nonlocal Neumann operator have been investigated.

Our first main result is about the effect of the dispersal rate $D$ on the principal spectrum point $\la_{1}(-L)$. To highlight the dependence on $D$, we write $L_{D}$ for $L$. 

\begin{thmx}\label{thm-scaling-limit-eigenvalue1-Neumann}
Assume {\bf(H1)}-{\bf(H3)}. Suppose $J$ is symmetric with respect to each component. Then the following statement hold:
\begin{enumerate}
\item   $\la_1(-L_D)\leq -\frac{1}{|\Omega|T}\int_{0}^{T}\int_{\O}a(t,x)dxdt$.

\item  $\lim_{D\to0}\la_{1}(-L_D)=-\max_{\ol{\Om}}a_{T}$.

\item  $\lim_{D\to\infty}\la_{1}(-L_D)=-\frac{1}{T|\Om|}\int_{0}^{T}\int_{\Om}a(t,x)dxdt$.
\end{enumerate}
\end{thmx}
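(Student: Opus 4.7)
The plan is to handle the three claims via a common tool: the log-identity
\begin{equation*}
(\log\phi_D)_t(t,x) = \frac{D}{\si^{k+N}}\int_\Om J\!\left(\frac{x-y}{\si}\right)\!\left[\frac{\phi_D(t,y)}{\phi_D(t,x)}-1\right]dy + a(t,x) + \la_1(-L_D),
\end{equation*}
obtained by dividing $-L_D\phi_D = \la_1(-L_D)\phi_D$ by a positive principal eigenfunction $\phi_D\in\XX^{++}$ (when $\la_1(-L_D)$ is only the principal spectrum point, I would first perturb $a$ to $a-\epsilon$ so that Theorem \ref{prop-principal-e-Neumann}(3) applies, and then let $\epsilon\to 0$). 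Writing $\bar a := \frac{1}{T|\Om|}\int_0^T\int_\Om a\,dx\,dt$, for part~(1) I would integrate the identity first in $t\in[0,T]$ (periodicity kills the left-hand side), then in $x\in\Om$, and symmetrize the resulting double spatial integral via $J(-z)=J(z)$. The AM--GM bound $\frac{\phi_D(t,y)}{\phi_D(t,x)}+\frac{\phi_D(t,x)}{\phi_D(t,y)}\ge 2$ forces the dispersal contribution to be non-negative, and the identity rearranges to $\la_1(-L_D)\le -\bar a$, which is~(1).

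For part~(2), the two inequalities are handled separately. For the lower bound, I would apply the variational inequality $\la_1(-L_D)\ge\inf_{(t,x)}(-L_D\Phi)/\Phi$ (valid for any $\Phi\in\XX^{++}$) to the explicit test function
\begin{equation*}
\Phi_0(t,x)=\exp\!\left(\int_0^t[a(s,x)-a_T(x)]\,ds\right),
\end{equation*}
which is $T$-periodic. A direct computation yields $(-L_D\Phi_0)/\Phi_0 = -a_T(x) + O(D)$ uniformly on $\R\times\ol\Om$, whence $\liminf_{D\to 0}\la_1(-L_D)\ge -\max_{\ol\Om}a_T$. For the upper bound I would evaluate the log-identity (averaged only in $t$) at a maximizer $x_0$ of $a_T$, giving
\begin{equation*}
\la_1(-L_D)+\max_{\ol\Om}a_T = -\frac{D}{T\si^{k+N}}\int_0^T\!\!\int_\Om J\!\left(\frac{x_0-y}{\si}\right)\!\left[\frac{\phi_D(t,y)}{\phi_D(t,x_0)}-1\right]dy\,dt,
\end{equation*}
and argue via concentration that the right-hand side tends to zero: the eigenfunction $\phi_D$ concentrates on $\{a_T=\max_{\ol\Om}a_T\}$, so the integrand stays bounded near $x_0$ and vanishes away from it, and the factor $D$ closes the argument.

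For part~(3), the upper bound is exactly~(1). For the lower bound I would rewrite the symmetrized form of~(1) as
\begin{equation*}
\la_1(-L_D)+\bar a = -\frac{D}{2T|\Om|\si^{k+N}}\int_0^T\!\!\int_\Om\!\!\int_\Om J\!\left(\frac{x-y}{\si}\right)\frac{(\phi_D(t,y)-\phi_D(t,x))^2}{\phi_D(t,x)\phi_D(t,y)}\,dy\,dx\,dt
\end{equation*}
and argue by compactness. Normalize $\max\phi_D=1$; the eigenvalue equation keeps $(\log\phi_D)_t$ bounded (so $\phi_D$ is equicontinuous in $t$), and $\phi_{D,t}-(a+\la_1)\phi_D$ is bounded, forcing the nonlocal dispersal term to be $O(1/D)$. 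Connectedness of $\Om$, $J(0)>0$, and a finite covering of $\Om$ by balls of radius comparable to $\si$ then propagate local spatial constancy, so every subsequential limit $\phi_\infty$ is independent of $x$. Integrating the eigenvalue equation over $\Om$ (the dispersal term vanishes by symmetry of $J$) produces an ODE for $\int_\Om\phi_D\,dx$ whose periodic limit forces $\la_\infty=-\bar a$.

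The main obstacle is the upper bound in~(2): as the introduction emphasizes, the principal eigenvalue of the nonlocal Neumann operator is not monotone in the domain, so I cannot localize to a shrinking neighbourhood of $x_0$ and bound $\la_1(-L_D)$ from above by a principal eigenvalue on that neighbourhood. The concentration of $\phi_D$ on the maximizing set of $a_T$ must instead be extracted directly from the eigenvalue equation, using positivity of $J$ near the origin and the structure of the log-identity at $x_0$. The compactness step in~(3) is also delicate because nonlocal operators lack classical elliptic regularity, so equicontinuity of $\phi_D$ in $x$ has to be recovered from the equation rather than from a Schauder-type estimate.
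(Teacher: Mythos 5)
Your part~(1) is the same as the paper's: divide the eigen-equation by the positive eigenfunction, integrate over $[0,T]\times\Om$, symmetrize the double spatial integral using the evenness of $J$, and use positivity of $(\sqrt{\phi_D(t,y)/\phi_D(t,x)}-\sqrt{\phi_D(t,x)/\phi_D(t,y)})^2$. For the lower bound in~(2) you use exactly the paper's test function $\Phi_0(t,x)=\exp\int_0^t[a(s,x)-a_T(x)]\,ds$ and the $\sup$-characterization of the generalized principal eigenvalue (Theorem~2.4 / Lemma~3.1(2) in the paper). For the upper bound in~(2), however, you manufacture a difficulty that is not there: in the identity
\begin{equation*}
\la_1(-L_D)+\max_{\ol\Om}a_T = -\frac{D}{T\si^{k+N}}\int_0^T\!\!\int_\Om J\!\left(\frac{x_0-y}{\si}\right)\!\left[\frac{\phi_D(t,y)}{\phi_D(t,x_0)}-1\right]dy\,dt,
\end{equation*}
the bracket is trivially bounded below by $-1$, so the right-hand side is at most $D/\si^k$, and no concentration of $\phi_D$ is needed at all. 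The paper does not even use this identity; it just quotes the universal bound $\la_1(-L)<\min_{x\in\ol\Om}[\frac{D}{\si^k}\int_\Om J(\frac{x-y}{\si})\frac{dy}{\si^N}-a_T(x)]$ from Theorem~2.1(2), which is the same elementary fact. Your stated ``main obstacle'' (non-monotonicity in the domain) is real for Theorem~B but plays no role in Theorem~A(2).

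The genuine gap is in part~(3). You normalize $\max\phi_D=1$ and assert that the eigen-equation keeps $(\log\phi_D)_t$ (equivalently $\phi_{D,t}-(a+\la_D)\phi_D$) bounded uniformly as $D\to\infty$, and then want to deduce that the nonlocal dispersal term is $O(1/D)$. But $\phi_{D,t}-(a+\la_D)\phi_D$ \emph{is} the dispersal term $\frac{D}{\si^{k+N}}\int_\Om J(\cdot)[\phi_D(y)-\phi_D(x)]\,dy$, which carries an explicit factor of $D$; under the $\sup$-normalization the only a priori bound is $O(D/\si^k)$, not $O(1)$. You cannot get the needed smallness of $\int J[\phi_D(y)-\phi_D(x)]\,dy$ out of the equation without first establishing some smallness of the oscillation of $\phi_D$, which is precisely the thing to be proved — the argument is circular. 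The subsequent ``covering argument'' for spatial constancy presupposes local oscillation control you do not have. The paper avoids all of this by normalizing $\|\phi_D\|_{L^2([0,T]\times\Om)}=1$, multiplying the eigen-equation by $\phi_D$ and integrating to get the \emph{energy} bound $\frac{D}{2}\int_0^T\!\int_\Om\!\int_\Om J(x-y)[\phi_D(t,y)-\phi_D(t,x)]^2\,dy\,dx\,dt\le C$ (from boundedness of $a$ and $\la_D$), then invoking the Poincar\'e-type inequality of Lemma~3.2 to conclude $\|\phi_D-\ol\phi_D\|_{L^2([0,T]\times\Om)}^2\le C_1C/D$; the variation-of-constants analysis of the averaged ODE $\partial_t\ol\phi_D=(\ol a(t)+\la_D)\ol\phi_D+O(D^{-1/2})$ together with periodicity and the $L^2$ normalization then forces $\int_0^T(\ol a(t)+\la_D)\,dt\to0$. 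If you want to pursue the $\sup$-normalization route, you must first produce a quantitative $L^2$ (or weaker) oscillation bound from the energy identity; as written, step one of your compactness argument is unsupported.
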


The next  result is about effect of the dispersal range characterized by $\si$    on the principal spectrum point $\la_{1}(-L)$ classified by ecological stable strategy. To highlight the dependence on $\si$ and $k$, we write $L_{\si,k}$ for $L$.

\begin{thmx}\label{limit-sigma}
Assume {\bf(H1)}-{\bf(H3)}. The following statements hold.  

\begin{enumerate}
\item For each $k\geq0$, there holds 
$$
\lim_{\si\to\infty}\la_{1}(-L_{\si,k})=-\max_{\ol{\Om}}a_{T}.
$$

\item  If $k=0$, $a(t,x)$ is Lipschitz continuous with respect to $x$, there holds 
\begin{equation*}
\lim_{\si\to0^{+}}\la_{1}(-L_{\si,0})=-\max_{\ol{\Om}} a_T.
\end{equation*}

\end{enumerate}
\end{thmx}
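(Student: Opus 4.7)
The plan is to prove each statement by matching upper and lower bounds on $\lambda_1(-L_{\sigma,k})$. The two main tools are the Collatz--Wielandt-type inequality
\[
\inf_{(t,x)\in\R\times\overline\Omega}\frac{-L\phi}{\phi}\;\leq\;\lambda_1(-L)\;\leq\;\sup_{(t,x)\in\R\times\overline\Omega}\frac{-L\phi}{\phi},\qquad\phi\in\XX^{++},
\]
which follows from the periodic maximum principle to be established later in the paper, together with the bound $\lambda_1\leq\lambda_*$ coming from Theorem~\ref{prop-principal-e-Neumann}.

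The canonical periodic gauge $\phi_0(t,x):=\exp\!\big(\!\int_0^t(a(s,x)-a_T(x))\,ds\big)\in\XX^{++}$ handles both lower bounds. A direct computation in which the $-\partial_t$ contribution $(a_T(x)-a(t,x))\phi_0$ cancels the zeroth-order term $a(t,x)\phi_0$ up to $a_T(x)\phi_0$ yields
\[
\frac{-L_{\sigma,k}\phi_0}{\phi_0}(t,x)=-a_T(x)-\frac{\NN_{\sigma,k}\phi_0}{\phi_0}(t,x),\qquad \NN_{\sigma,k}u:=\frac{D}{\sigma^{k+N}}\int_\Omega J\!\left(\tfrac{x-y}{\sigma}\right)(u(t,y)-u(t,x))\,dy.
\]
In part~(1) ($\sigma\to\infty$), boundedness of $J$ forces $\|\NN_{\sigma,k}\phi_0/\phi_0\|_\infty=O(\sigma^{-(k+N)})\to 0$. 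In part~(2) ($\sigma\to 0^+$, $k=0$), the Lipschitz hypothesis on $a$ makes $\phi_0$ Lipschitz in $x$ uniformly in $t$ (with a positive lower bound), so the change of variables $z=(x-y)/\sigma$ gives $|\NN_{\sigma,0}\phi_0/\phi_0|\leq D\,\mathrm{Lip}(\phi_0)\,\sigma\int J(z)|z|\,dz=O(\sigma)\to 0$. In either case $\inf(-L\phi_0/\phi_0)\to-\max_{\overline\Omega}a_T$, so $\liminf\lambda_1\geq-\max_{\overline\Omega} a_T$. The upper bound in part~(1) is immediate from Theorem~\ref{prop-principal-e-Neumann} combined with the same $O(\sigma^{-(k+N)})$ estimate applied to the dispersal term in $\lambda_*(\sigma,k)$.

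The upper bound in part~(2) is the main obstacle, because in this regime $\lambda_*(\sigma,0)$ tends to $D-\max a_T$ (or $\tfrac{D}{2}-\max a_T$ if the maximizer sits on $\partial\Omega$), strictly above the target $-\max a_T$, so Theorem~\ref{prop-principal-e-Neumann} alone is insufficient. My plan is to build a localized approximate sub-eigenfunction near a maximizer $x^*$ of $a_T$: for $0<\sigma\ll r\ll 1$ and a small $\delta>0$, set $\phi_{r,\delta}(t,x)=(\psi_r(x)+\delta)\phi_0(t,x)$ where $\psi_r\in C(\overline\Omega)$ is a non-negative plateau function, equal to $1$ on $B_r(x^*)\cap\overline\Omega$ and vanishing outside $B_{2r}(x^*)$. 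Once $\sigma<r/\ga$, the kernel at $x^*$ is supported inside the flat region $\{\psi_r\equiv 1\}$, so the nonlocal contribution at $x^*$ reduces to the same Lipschitz estimate used above and gives $(-L\phi_{r,\delta}/\phi_{r,\delta})(t,x^*)=-\max a_T+O(\sigma)$. The technical heart of the argument is controlling $\sup_{(t,x)}(-L\phi_{r,\delta}/\phi_{r,\delta})$ away from $x^*$: since the Neumann nonlocal principal eigenvalue is not monotone in the domain --- the novelty emphasized in the introduction --- one cannot simply restrict the problem to a sub-domain around $x^*$. I would couple $r$ and $\delta$ to $\sigma$ (for example $r=\sqrt{\sigma}$, $\delta=\sigma$) and run a concentration argument on any would-be maximizing sequence $(t_n,x_n)$ of $-L\phi_{r,\delta}/\phi_{r,\delta}$: the localized shape of $\phi_{r,\delta}$ together with the Lipschitz regularity of $a$ forces $x_n\to x^*$, which, combined with the pointwise computation above, yields $\limsup\lambda_1\leq-\max_{\overline\Omega}a_T$.
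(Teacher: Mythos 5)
Your treatment of part~(1) and of the lower bound $\liminf_{\si\to0^+}\la_1(-L_{\si,0})\geq-\max_{\ol\Om}a_T$ in part~(2) is correct and essentially the paper's argument: the test function $\phi_0(t,x)=e^{\int_0^t(a(s,x)-a_T(x))ds}$, combined with the sup--inf characterization $\la_1=\la_p=\la_p'$ (Theorem~\ref{characterization}) and the bound $\la_1<\la_*$ from Theorem~\ref{prop-principal-e-Neumann}(2), is exactly what the paper uses, and your $O(\si^{-(k+N)})$ and $O(\si)$ estimates for the dispersal term are correct.

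The proposal breaks down at the upper bound in part~(2), which is precisely the delicate step singled out in the paper's introduction. With $\phi_{r,\delta}(t,x)=(\psi_r(x)+\delta)\phi_0(t,x)$, the identity $(\phi_0)_t=(a-a_T)\phi_0$ gives
\[
\frac{-L_{\si,0}\phi_{r,\delta}}{\phi_{r,\delta}}(t,x)
= -a_T(x) - \frac{D}{\si^{N}(\psi_r(x)+\delta)\phi_0(t,x)}\int_\Om J_{\si}(x-y)\bigl[\phi_{r,\delta}(t,y)-\phi_{r,\delta}(t,x)\bigr]\,dy .
\]
For $x$ outside $B_{2r+\si\ga}(x^*)$ the kernel only sees the region where $\psi_r\equiv0$, so $\phi_{r,\delta}\equiv\delta\phi_0$ there, and the Lipschitz estimate on $\phi_0$ gives a nonlocal contribution of size $O(\si)$ \emph{after} the division by $\delta\phi_0$ (the $\delta$'s cancel). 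Hence at such points $-L\phi_{r,\delta}/\phi_{r,\delta}=-a_T(x)+O(\si)$, and therefore
\[
\sup_{(t,x)\in\R\times\ol\Om}\frac{-L_{\si,0}\phi_{r,\delta}}{\phi_{r,\delta}}
\;\geq\; -\inf_{x\in\Om\setminus B_{2r+\si\ga}(x^*)}a_T(x)+O(\si),
\]
which converges to a value $\geq-\max_{\ol\Om}a_T$ and, unless $a_T$ is constant, strictly larger than $-\max_{\ol\Om}a_T$ (typically close to $-\min a_T$). The concentration claim that any maximizing sequence $(t_n,x_n)$ must approach $x^*$ is therefore false: the supremum in the Collatz--Wielandt upper bound is driven to where $a_T$ is \emph{small}, since the inequality $(L+\la)[\phi]\geq0$ must hold on all of $\ol\Om$. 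No coupling of $r$, $\delta$, $\si$ can repair this; the only way to shrink the bad region is to take $r$ so large that $B_{2r}\supset\Om$, at which point $\psi_r\equiv1$ and the construction collapses back to $\phi_0$, giving only $\la_1\leq-\min a_T+O(\si)$. This is the intrinsic difficulty the paper flags (non-monotonicity in the domain of the Neumann eigenvalue), and it cannot be sidestepped by a single localized test function for the Neumann problem itself.

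The paper's workaround is structurally different. It rewrites the Neumann operator as a Dirichlet-type operator $\wt L^\si_\Om$ with a perturbed coefficient $\wt a^\si(t,x)=a(t,x)+D-D\int_{(\Om-x)/\si}J$, estimates the \emph{Dirichlet} generalized principal eigenvalue on a shrinking ball $B_{\si^2}(x^*)$ via a fourth-order Taylor expansion of a compactly supported extension $\wt\phi_\ep$ (so that the integral over $\Om$ becomes an integral over $\R^N$, with boundary terms entering only through the small correction $\mathcal J^2_\si$), and then invokes the domain monotonicity $\wt\la_p(a,-\wt L^\si_\Om)\leq\wt\la_p(a,-\wt L^\si_{B_{\si^2}})$ from \cite[Prop.~6.1(2)]{ShenVo1}. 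That monotonicity is exactly the property the Neumann eigenvalue lacks, and it is the mechanism by which the localization at $x^*$ becomes legitimate. To complete your proof of part~(2) you would need either to import this detour through the Dirichlet problem, or to replace the single-test-function Collatz--Wielandt bound by a genuinely different mechanism (e.g.\ a direct semigroup/evolution-family growth estimate on $\|\Phi(T,0)\|$), neither of which is present in the proposal.

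Two minor points worth recording: the sup--inf characterization requires $\la_1(-L_{\si,k})$ to be the principal eigenvalue, which need not hold a priori; the paper handles this by the approximation Proposition~\ref{prop-approximation-08-31-18}, and you should invoke the same reduction. Also, $\la_*(\si,0)$ tends to $D\cdot\theta(x)-\max a_T$ with $\theta(x)=\lim_{\si\to0}\int_{(\Om-x)/\si}J$, which is $1$ in the interior but smaller near $\pa\Om$; your remark on $\la_*$ in part~(2) should be adjusted accordingly.
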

\begin{remark} Due to corollary Theorem B (3) in \cite{RS12}, one sees that $L_{\sigma,0}$ always admits a principal eigenvalue for  $0<\sigma\ll 1$. Therefore, it is interesting that there is no need to impose any additional condition on $a(t,x)$ so that the limit in (2) holds.

\end{remark}

We obtain a nice picture of the limits of principal spectrum point with respect to large and small dispersal range classified by the ecological stable
strategy. Theorem \ref{limit-sigma} (1) extends Theorem 2.3 (2) of Shen and Xie \cite{ShXi15-1} for the time-periodic operator, however, the lack of variational formular yields significant difficulties, for which we have to use a different technique to prove the limit. The main idea is to consider the relation between the generalized principal eigenvalues of the nonlocal operators of Neumann and Diriclet types, which was previously considered by Shen-Vo \cite{ShenVo1}. Comparing the notions of generalized principal eigenvalue of local operators with Dirichlet and Neumman boundary conditions (see for instance the point (1.2) in \cite{Patrizi}), it is worth to pointing out that in the definitions of the generalized principal eigenvalue, it is necessary to impose the boundary condition for Neumann eigenvalue while there is no constraint on the boundary for Dirichlet eigenvalue. Here, the main difference is that no boundary constraint for both definitions of generalized principal eigenvalue of Dirichlet and Neumann types (see  (\ref{characterization}) and Definition \ref{Neuman-type} below). To prove Theorem B (2), we must overcome additional difficulties due to the structure of the nonlocal Neumman operator is that the principal eigenvalue is not monotone with respect to the domain, therefore new technique has been employed to deal with this issue. 

Next, we study the maximum principle. It is well-known that, one of the most interesting properties of the principal eigenvalue for an elliptic or parabolic operator is its use to characterize the the validity of the maximum principle. The validity of the maximum principle for nonlocal elliptic operator has first been  characterized by Coville \cite{Co10}. Recently, in the previous work \cite{ShenVo1}, we also obtained the characterization for maximum principle for nonlocal parabolic operator of Dirichlet type as following

\begin{definition}[Maximum principle] \label{defn-mp}
We say that $L_\O$ admits the {\rm maximum principle} if for any function $u\in C^{1,0}([0,T]\times\ol\O)$ satisfying
 \begin{equation}\label{mp-assumption}
 \begin{cases} 
L_\O[u]\leq 0 & \text{in $(0,T]\times\O$},\\
 u\geq0 & \text{on $(0,T]\times\partial\O$},\\
u(0,\cdot)\geq u(T,\cdot) & \text{in $\Om$},
 \end{cases}
\end{equation}
there must hold $u>0$ in $[0,T]\times\O$ unless $u\equiv0$ in $[0,T]\times\O$.
\end{definition}

\begin{theorem}[Maximum principle]\label{thm-mp-introduction}
Suppose {\bf(H1)}. If $\la_{1}(-L_{\Om})$ is the principal eigenvalue,  then $L_{\Om}$ admits the maximum principle if and only if $\lambda_1(-L_\O)\geq0$.
\end{theorem}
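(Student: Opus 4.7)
The plan is a two-direction argument built on the principal eigenfunction $\phi\in\XX^{++}$ of $-L_\Omega$ guaranteed by Theorem~\ref{prop-principal-e-Neumann}(1).

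\textit{Sufficiency} ($\lambda_1(-L_\Omega)\geq 0\Rightarrow$ MP). I would substitute $u=v\phi$. Using $L[\phi]=-\lambda_1\phi$ together with the identity
\[
v(t,y)\phi(t,y)-v(t,x)\phi(t,x)=\bigl(v(t,y)-v(t,x)\bigr)\phi(t,y)+v(t,x)\bigl(\phi(t,y)-\phi(t,x)\bigr),
\]
the inequality $L[u]\leq 0$ in $(0,T]\times\Omega$ recasts as
\[
-\phi\, v_t+\frac{D}{\sigma^{k+N}}\int_\Omega J\!\left(\tfrac{x-y}{\sigma}\right)\bigl(v(t,y)-v(t,x)\bigr)\phi(t,y)\,dy-\lambda_1 v\phi\leq 0.
\]
Assume for contradiction that $m:=\min v<0$, attained at some $(t_0,x_0)$. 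The $T$-periodicity of $\phi$ converts $u(0,\cdot)\geq u(T,\cdot)$ into $v(0,\cdot)\geq v(T,\cdot)$, so we may assume $t_0\in(0,T]$. The boundary hypothesis $u\geq 0$ on $\partial\Omega$ excludes $x_0\in\partial\Omega$. For $x_0\in\Omega$ the three terms $-\phi v_t$, the nonlocal integral, and $-\lambda_1 v\phi$ are each nonnegative at $(t_0,x_0)$ (the last since $\lambda_1\geq 0$ and $v(x_0)<0$), and so each must vanish. Combined with $J(0)>0$ and the connectedness of $\Omega$, iteration along a chain of balls $B_{\gamma\sigma}(\cdot)\cap\Omega$ propagates $v(t_0,\cdot)\equiv m$ across $\ol\Omega$, contradicting $v\geq 0$ on $\partial\Omega$. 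Repeating the argument with $m=0$ upgrades $u\geq 0$ to $u>0$ unless $u\equiv 0$.

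\textit{Necessity} (MP $\Rightarrow\lambda_1(-L_\Omega)\geq 0$). I would argue by contradiction: suppose $\lambda_1<0$ and let $\phi^*\in\XX^{++}$ be the principal eigenfunction of the adjoint operator $-L^*_\Omega$, which has the same principal eigenvalue $\lambda_1$. For any $u$ satisfying the MP hypotheses, Fubini in space and integration by parts in $t$, together with the periodicity $\phi^*(0,\cdot)=\phi^*(T,\cdot)$, yield
\[
\lambda_1\int_0^T\!\!\int_\Omega u\,\phi^*\,dx\,dt=-\int_0^T\!\!\int_\Omega L[u]\,\phi^*\,dx\,dt+\int_\Omega\bigl(u(0,\cdot)-u(T,\cdot)\bigr)\phi^*(0,\cdot)\,dx.
\]
The right-hand side is nonnegative under the MP hypotheses (using $\phi^*>0$), so $\lambda_1<0$ forces $\int u\phi^*\leq 0$, whereas the MP conclusion $u>0$ (unless $u\equiv 0$) with $\phi^*>0$ forces $\int u\phi^*>0$ (unless $u\equiv 0$). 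Hence, under (MP) combined with $\lambda_1<0$, the only admissible $u$ is $u\equiv 0$. A contradiction then follows by producing any non-trivial admissible $u$; a natural choice is the solution of $L[u]=0$ in $(0,T]\times\Omega$, $u\equiv 1$ on $(0,T]\times\partial\Omega$, $u(0,\cdot)=u(T,\cdot)$, whose existence for $\lambda_1\neq 0$ is standard by Fredholm/Perron theory, and which satisfies the three MP hypotheses while being non-trivial.

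\textit{Main obstacle.} The delicate half is the necessity: the boundary hypothesis $u\geq 0$ on $\partial\Omega$ in the MP definition (inherited from the Dirichlet setting of \cite{ShenVo1}) obstructs the one-line counterexample $u=-\phi$ that suffices in the Dirichlet framework, so one must couple an adjoint-testing identity with the solvability of an auxiliary boundary-value problem. The sufficiency is more routine once the substitution $u=v\phi$ is made, but some care is still required to propagate the equality case of the nonlocal strong minimum principle across $\Omega$ through the support $B_\gamma(0)$ of $J$---a subtlety amplified by the non-monotonicity of the Neumann principal eigenvalue with respect to the domain that is flagged in the introduction.
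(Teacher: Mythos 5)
The statement you are proving is the Dirichlet--type maximum principle recalled from \cite{ShenVo1}; the paper does not reprove it but gives the parallel Neumann argument in Theorem~\ref{thm-mp-1}, so I compare your proposal with that proof.

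Your \emph{sufficiency} direction is essentially the paper's argument: substitute $v=u/\phi$, observe that at a negative minimum of $v$ in $(0,T]\times\Omega$ the three terms $-\phi v_t$, the nonlocal integral, and $-\lambda_1 v\phi$ are each nonnegative, conclude that each must vanish, and propagate equality through the support of $J$ to the boundary, where $u\geq0$ is contradicted. This is the same substitution-and-minimum argument used in the implications (2)$\Rightarrow$(1) and (1)$\Rightarrow$(3) of Theorem~\ref{thm-mp-1}, and it is correct modulo routine care at the endpoint $t=T$.

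Your \emph{necessity} direction, however, has a genuine gap. The adjoint-testing identity you derive is algebraically correct, but it only reduces the problem to exhibiting a non-trivial admissible $u$, and your proposed construction does not work. The problem ``$L[u]=0$ in $(0,T]\times\Omega$, $u\equiv1$ on $(0,T]\times\partial\Omega$, $u(0,\cdot)=u(T,\cdot)$'' is not a well-posed boundary value problem for a nonlocal operator of this kind: $L$ is an integro-differential operator on $C^{1,0}([0,T]\times\ol\Omega)$ whose equation $L[u]=0$ directly prescribes $u_t$ on all of $\ol\Omega$ (including $\partial\Omega$) in terms of the values of $u$ on $\ol\Omega$, so there is no free Dirichlet trace left to impose, and ``Fredholm/Perron theory'' does not supply the claimed solution. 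What is actually needed is the cutoff construction used in the (3)$\Rightarrow$(1) step of Theorem~\ref{thm-mp-1}: take a continuous $\eta:\ol\Omega\to[0,1]$ with $\eta\equiv1$ on $\Omega_0\stst\Omega$ and $\eta\equiv0$ on $\partial\Omega$, compute via the eigen-equation that $L[\eta\phi]=D\int_\Omega J(x-y)[\eta(y)-\eta(x)]\phi(t,y)\,dy-\lambda_1\eta(x)\phi(t,x)$, and verify (splitting into the three regimes $x\in\ol\Omega_0$, $\eta(x)\geq\tfrac12$, $\eta(x)<\tfrac12$ and shrinking $\Omega\setminus\ol\Omega_0$, using $J(0)>0$) that $L[\eta\phi]>0$ once $\lambda_1<0$. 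Then $u=-\eta\phi$ satisfies all three hypotheses of Definition~\ref{defn-mp} (in particular $u=0\geq0$ on $\partial\Omega$), is $T$-periodic, and is strictly negative on $\Omega_0$, violating the conclusion $u>0$ or $u\equiv0$. This yields the contrapositive directly and also makes the adjoint identity unnecessary. You should replace the Fredholm/Perron step by this cutoff construction; the rest of your outline, appropriately adapted, then goes through.
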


We emphasize that there is an interesting difference between the characterization of maximum principle for local and nonlocal operator is that for local operators the maximum principle holds if and only if the principal eigenvalue is strictly positive while for the nonlocal operator, the eigenvalue is only needed to be nonnegative. The maximum principle is well-known to be one of the most important tools in analysis to prove the  well-posedness, the asymptotic behavior or even the symmetry of the solutions of elliptic or parabolic equations. The applications of the maximum principle for time-independent nonlocal operators involving continuous and fractional kernels can be found in the nice recent works \cite{HRSV,JW1,JW2}. Therefore,  obtaining the condition for the validity of maximum principle  plays an important role in the community of reaction-diffusion equation and related fields. Therefore, our task in the current paper is to obtain such a result for nonlocal operator of Neumann type, which is also of independent interest. More precisely, we prove

\begin{thmx}\label{thm-mp-1} 
Assume {\bf(H1)}-{\bf(H3)}. If $\la_{1}(-L)$ is the principal eigenvalue of $-L$, then the following conditions are equivalent.
\begin{enumerate}
\item $\lambda_1(-L)\geq0$

\item $L$ possesses a super-solution in $\XX^{++}$, namely, there exists $\vp\in \XX^{++}$ such that $L[\varphi](t,x)\leq0$ for $(t,x)\in\R\times\ol{\Om}$.

\item Any strict super-solution  $\varphi\in \XX$ of $L$ must be strictly positive. In other words, $L$ satisfies the strong maximum principle.
\end{enumerate}
\end{thmx}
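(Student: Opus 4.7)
The plan is to prove the cyclic chain (1)$\Rightarrow$(2)$\Rightarrow$(3)$\Rightarrow$(1); the two short implications come first. For (1)$\Rightarrow$(2), Theorem \ref{prop-principal-e-Neumann}(1) produces an eigenfunction $\vp_1\in\XX^{++}$ with $L[\vp_1]=-\la_1(-L)\vp_1$, and since $\la_1(-L)\geq 0$ this already gives $L[\vp_1]\leq 0$, so $\vp_1$ is the required super-solution. For (3)$\Rightarrow$(1), I argue by contrapositive: if $\la_1(-L)<0$, then $\psi:=-\vp_1\in\XX$ satisfies $L[\psi]=\la_1(-L)\vp_1<0$ everywhere on $\R\times\ol{\Om}$, producing a strict super-solution that is strictly negative and violating (3).

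The main step is (2)$\Rightarrow$(3). Let $\vp^*\in\XX^{++}$ with $L[\vp^*]\leq 0$ be as in (2), and let $\psi\in\XX$ be an arbitrary strict super-solution, i.e., $L[\psi]\leq 0$ with $L[\psi]\not\equiv 0$. I would employ the quotient method: set
\begin{equation*}
c:=\inf_{(t,x)\in\R\times\ol{\Om}}\frac{\psi(t,x)}{\vp^*(t,x)},
\end{equation*}
which is finite and attained at some $(t_*,x_*)$ due to the $T$-periodicity of $\psi$ and $\vp^*$, the strict positivity of $\vp^*$, and compactness of $[0,T]\times\ol{\Om}$. If $c>0$ then $\psi\geq c\vp^*>0$ and we are done. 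Otherwise $c\leq 0$ and $v:=\psi-c\vp^*\in\XX$ is nonnegative, vanishes at $(t_*,x_*)$, and satisfies $L[v]=L[\psi]-cL[\vp^*]\leq 0$ (since $-cL[\vp^*]\leq 0$ because $c\leq 0$ and $L[\vp^*]\leq 0$). Applying the strong minimum principle stated next yields $v\equiv 0$, hence $\psi\equiv c\vp^*$. This in turn forces $L[\psi]\equiv cL[\vp^*]\geq 0$, which combined with $L[\psi]\leq 0$ gives $L[\psi]\equiv 0$, contradicting the strictness of $\psi$. Therefore $c>0$ and $\psi>0$.

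The key auxiliary result is the following strong minimum principle: if $v\in\XX$ satisfies $v\geq 0$, $L[v]\leq 0$, and $v(t_*,x_*)=0$ for some point, then $v\equiv 0$. Its proof proceeds in two propagation stages. At the zero $(t_*,x_*)$, which is a global minimum, the $C^{1,0}$ regularity built into $\XX$ together with $T$-periodicity yield $v_t(t_*,x_*)=0$, so $L[v](t_*,x_*)\leq 0$ collapses to
\begin{equation*}
\int_{\Om}J\!\left(\frac{x_*-y}{\sigma}\right)v(t_*,y)\,dy\leq 0.
\end{equation*}
Since $v\geq 0$ and $J$ is strictly positive in a neighborhood of the origin (from $J(0)>0$ and continuity), this forces $v(t_*,\cdot)\equiv 0$ on a small ball about $x_*$ inside $\Om$; iterating at each new zero and exploiting the connectedness of $\Om$ gives $v(t_*,\cdot)\equiv 0$ on $\Om$. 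For the temporal propagation, the bound $L[v]\leq 0$ combined with the uniform boundedness of the kernel integral and of $a$ yields a Gronwall-type inequality $v_t\geq -Cv$ for some constant $C=C(D,\sigma,k,\|a\|_\infty)$, so $e^{Ct}v(t,x)$ is nondecreasing in $t$; this spreads $v\equiv 0$ backward from $t_*$, and $T$-periodicity then extends the identity to all of $\R\times\ol{\Om}$.

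The principal obstacle is the temporal propagation: the Gronwall bound only spreads zeros in the backward direction of time, so the $T$-periodicity built into $\XX$ is essential for closing the argument. This is a point of departure from the Dirichlet parabolic setting in \cite{ShenVo1}, where boundary data provide additional leverage. A secondary subtlety in (2)$\Rightarrow$(3) is that the strictness of the super-solution is precisely what eliminates the endpoint case $\psi\equiv c\vp^*$ with $c\leq 0$; this is consistent with the failure of the strong maximum principle when $\la_1(-L)<0$, as exhibited by $-\vp_1$ in the proof of (3)$\Rightarrow$(1).
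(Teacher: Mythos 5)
Your proof is correct, and it takes a genuinely different route from the paper's. The paper establishes (1)$\Leftrightarrow$(2) and (1)$\Leftrightarrow$(3) separately, each direction via the quotient trick: setting $w=\vp/\phi$ with $\phi$ the principal eigenfunction and evaluating at the extremum of $w$, where $w_t$ vanishes, the nonlocal integral has a definite sign, and the remaining term $-\la_1 w\phi$ forces the desired sign on $\la_1$ or on $L[\vp]$. You instead close a cyclic chain $(1)\Rightarrow(2)\Rightarrow(3)\Rightarrow(1)$ whose heart is a self-contained strong minimum principle: $v\geq 0$, $L[v]\leq 0$, $v(t_*,x_*)=0$ implies $v\equiv 0$, proved by spatial propagation of the zero set (using $J(0)>0$, continuity of $J$, and connectedness of $\Om$) followed by backward-in-time Gronwall propagation closed up by $T$-periodicity. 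This lemma is of independent interest and makes the step (2)$\Rightarrow$(3) entirely independent of the principal eigenfunction, which the paper leans on at every step. Your (3)$\Rightarrow$(1) is also genuinely shorter: you test (3) directly against $-\phi$, which satisfies $L[-\phi]=\la_1\phi<0$ when $\la_1<0$, whereas the paper passes through a cutoff construction $\eta\phi$ with a three-case estimate on $L[\eta\phi]$; the cutoff is unnecessary here. One small remark on terminology: your reading of ``strict super-solution'' as ``$L[\psi]\leq 0$ with $L[\psi]\not\equiv 0$'' is weaker than the paper's pointwise $L[\psi]<0$, so your (2)$\Rightarrow$(3) actually proves a stronger statement than asserted; since the counterexample $-\phi$ in (3)$\Rightarrow$(1) satisfies the pointwise-strict condition, the equivalence survives under either reading.
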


\begin{thmx}\label{thm-mp-2}
Assume {\bf(H1)}-{\bf(H3)}. If $\la_{1}(-L)$ is the principal eigenvalue of $-L$, then the following conditions are equivalent:
\begin{enumerate}
\item $\lambda_1(-L)>0$

\item $L$ possesses a strict  super-solution in $\XX^{+}$, namely, there exists $\vp\in \XX^+$ such that $L[\varphi](t,x)<0$ for $(t,x)\in\R\times\ol{\Om}$.
\end{enumerate}
\end{thmx}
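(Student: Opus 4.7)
The plan is to handle the two implications separately: $(1)\Rightarrow(2)$ is immediate from the existence of a principal eigenfunction, while $(2)\Rightarrow(1)$ requires a two-step sliding argument based on a touchdown observation for the nonlocal operator $L$.

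For $(1)\Rightarrow(2)$, since $\la_1(-L)$ is the principal eigenvalue, Theorem~\ref{prop-principal-e-Neumann}(1) yields $\psi\in\XX^{++}$ with $-L[\psi]=\la_1(-L)\psi$, whence
$$
L[\psi](t,x)=-\la_1(-L)\,\psi(t,x)<0\quad\text{on }\R\times\ol{\Om},
$$
since $\psi>0$ and $\la_1(-L)>0$. Hence $\psi\in\XX^{++}\subset\XX^{+}$ is a strict super-solution of the desired type.

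For $(2)\Rightarrow(1)$, fix $\vp\in\XX^{+}$ with $L[\vp]<0$ pointwise. The key tool I would apply twice is the following touchdown principle: if $w\in\XX$ satisfies $w\geq 0$ on $\R\times\ol{\Om}$ and $w(t_0,x_0)=0$ at some $(t_0,x_0)$, then $w_t(t_0,x_0)=0$ (the $T$-periodicity and smoothness of $w$ in $t$ force the minimum in $t$ to be attained in the interior of $\R$) and hence
$$
L[w](t_0,x_0)=\frac{D}{\si^{k+N}}\int_{\Om}J\!\left(\tfrac{x_0-y}{\si}\right)w(t_0,y)\,dy\geq 0,
$$
since the other two terms vanish. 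Applied to $\vp$ itself, this upgrades $\vp\in\XX^{+}$ to $\vp\in\XX^{++}$: $\vp\not\equiv 0$ (otherwise $L[\vp]\equiv 0$), and a zero of $\vp$ would give $L[\vp]\geq 0$ at that point, contradicting $L[\vp]<0$. Theorem~\ref{thm-mp-1} applied to the supersolution $\vp\in\XX^{++}$ then yields $\la_1(-L)\geq 0$.

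To rule out $\la_1(-L)=0$, assume equality and let $\psi\in\XX^{++}$ be the principal eigenfunction, so $L[\psi]=0$. Compactness of $(\R/T\Z)\times\ol{\Om}$ together with $\vp,\psi>0$ makes $c^{*}:=\min_{(t,x)}\vp(t,x)/\psi(t,x)>0$ well-defined and attained at some $(t_0,x_0)$; set $w:=\vp-c^{*}\psi\in\XX^{+}$, which vanishes at $(t_0,x_0)$. The touchdown principle gives $L[w](t_0,x_0)\geq 0$, while by linearity $L[w]=L[\vp]-c^{*}L[\psi]=L[\vp]<0$, a contradiction. Thus $\la_1(-L)>0$. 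The only subtle point in the whole plan is verifying that the touchdown principle genuinely requires no boundary assumption on $w$, which is precisely the feature of the nonlocal Neumann setting that lets a non-strict sign of the principal eigenvalue suffice in Theorem~\ref{thm-mp-1} but forces strict positivity here.
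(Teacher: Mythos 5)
Your proof is correct, and its backbone — the touchdown observation that if $w\in\XX^{+}$ vanishes at $(t_0,x_0)$ then $w_t(t_0,x_0)=0$ and $L[w](t_0,x_0)\geq0$ — is exactly the mechanism the paper uses, but you package it differently. The paper works entirely with the multiplicative quotient $w=\vp/\phi$: after rewriting $L[\vp]$ in terms of $w$, it first shows $\min w>0$ (same touchdown, expressed via $w$), then re-evaluates at the minimizer of $w$ and reads off $\la_1\,w(t_0,x_0)\phi(t_0,x_0)>0$ in one stroke, never invoking Theorem~\ref{thm-mp-1}. You instead split the implication into three stages: (a) upgrade $\vp\in\XX^{+}$ to $\vp\in\XX^{++}$ by touchdown applied to $\vp$ itself; (b) quote Theorem~\ref{thm-mp-1}(2)$\Rightarrow$(1) to get $\la_1\geq0$; (c) rule out $\la_1=0$ by the additive sliding $w=\vp-c^{*}\psi$ with $c^{*}=\min\vp/\psi$, which is the additive twin of the paper's minimum-of-quotient step. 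What your version buys is modularity — it leans on the already-proved Theorem~\ref{thm-mp-1} and makes the distinction between $\la_1\geq0$ and $\la_1>0$ conceptually transparent as ``non-strict vs.\ strict contact'' — at the price of an extra stage; the paper's version is one self-contained computation that avoids depending on Theorem~\ref{thm-mp-1} but is slightly less transparent about why the strict inequality appears. Both are complete and correct.
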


The rest of the paper is organized as follows : In Section 2, we recall and prove some preliminary results that are necessary for the proofs of main results. Section 3 is devoted to the proofs of Theorem A and Theorem B. In Section 4, we prove the maximum principle stated in Theorem C and Theorem D. In the Appendix, we prove an additional result for the limit of the eigenvalue with respect to the dispersal range for the case $a(t,x)=a(x)$ and $k>2$.


\section{\bf Preliminaries}

In this section, we establish some necessary tools for later use. Before that, let us recall the following result :

\begin{theorem}
[\textbf{Theorem A }  \cite{ShenVo1}\label{thm-pe-introduction}]
Suppose {\bf(H1)}  and let $ a\in C_{T}(\R\times\ol{\Om})$ and  $\wt\lambda_{p}(a,-\wt L_\O)$ is the principal spectrum point  of the operator $-\wt L_{\O}$, where $\wt L_{\O}$ is defined by
\begin{equation}\label{oper1}
\wt L_{\O}[\psi]=-\psi_{t}(t,x)+D\int_{{\O}}J_{}(x-y)\psi(t,y)dy-D\psi(t,x)+ a(t,x)\psi(t,x).
\end{equation}

\begin{enumerate}
\item  If 
\begin{equation}\label{newcond1}
\frac{1}{\max_{y\in\ol{\Om}}a_{T}(y)-a_{T}}\notin L^{1}_{loc}(\ol{\Om}),
\end{equation}
then $\wt\lambda_{p}(a,-\wt L_\O)$ is the principal eigenvalue of $-\wt L_{\Om}$. 

\item If  $\wt\lambda_{p}(a,-\wt L_\O)$ is the principal eigenvalue of $-\wt L_{\Om}$, then
$$
\la_{1}(-\wt L_{\Om})=\wt\lambda_{p}(a,-\wt L_\O)=\wt\lambda_{p}'(a,-\wt L_{\Om}),
$$
where $\la_{1}(-\wt L_{\Om})$ is defined in Definition \ref{eigenvalue} and
\begin{equation}\label{characterization}
\left\lbrace\begin{split}
\wt\lambda_{p}(a,-\wt L_\O):&=\sup\left\{\lambda\in\R:\exists\phi\in \mathcal{X}^{++}_{\Om}\,\,\text{s.t.}\,\,(\wt L_{\Om}+\la)[\phi]\leq0\,\,\text{in ${\R\times\ol\O}$}\right\},\\
\wt\lambda_{p}'(a,-\wt L_\O):&=\inf\left\{\lambda\in\R:\exists\phi\in \mathcal{X}^{++}_{\Om}\,\,\text{s.t.}\,\,(\wt L_{\Om}+\la)[\phi]\geq0\,\,\text{in ${\R\times\ol\O}$}\right\}.
\end{split}\right.
\end{equation}
\end{enumerate}\label{recal}
\end{theorem}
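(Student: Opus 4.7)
The plan is to address the two parts of Theorem~\ref{thm-pe-introduction} in order, with part (1) carrying the analytic weight and part (2) reducing to a soft Krein--Rutman-type duality argument.

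For part (1), I would construct a principal eigenpair by regularization and passage to the limit. Writing $\wt L_\Om = -\partial_t + K - DI + a$, where $K\psi(t,x) = D\int_\Om J(x-y)\psi(t,y)\,dy$, the Poincar\'e (Floquet) map $U(T,0)$ of the time-periodic evolution is positive with compact convolution part, and its spectral radius corresponds to $-T\la_1(-\wt L_\Om)$. The intrinsic obstruction is that this spectral radius may coincide with the essential spectrum generated by the multiplier $a-D$, in which case no positive eigenfunction exists. To rule this out, regularize by replacing $a$ with a truncation $a_\varepsilon = \min\{a,\max_{\ol\Om}a_T - \varepsilon\}$, or equivalently by adding a small local diffusion $\varepsilon\Delta$. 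For each $\varepsilon>0$, Krein--Rutman applied to the Floquet map of the perturbed operator produces a principal eigenpair $(\la_1^\varepsilon,\phi^\varepsilon)\in\R\times\XX^{++}$, normalized by $\|\phi^\varepsilon\|_\infty=1$. Monotonicity of $\la_1^\varepsilon$ in $\varepsilon$ and the compactness of $K$ yield, in the limit $\varepsilon\to0$, a candidate pair $(\la_1,\phi)$ with $\phi\ge 0$ and $(\wt L_\Om+\la_1)[\phi]=0$.

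The hard part is ruling out $\phi\equiv 0$, and this is precisely the role of the hypothesis $\frac{1}{\max_{\ol\Om}a_T - a_T}\notin L^1_\loc(\ol\Om)$. Without this assumption the approximate eigenfunction could concentrate into a singular measure at the maximum set of $a_T$ and disappear in the limit. To quantify non-degeneracy I would test the eigenvalue equation for $\phi^\varepsilon$ against the time-averaged weight $(\max_{\ol\Om}a_T - a_T + \delta)^{-1}$; the Neumann-type structure produces no spatial boundary contribution, and the non-integrability hypothesis makes the resulting lower bound on $\int_\Om\phi^\varepsilon$ survive as $\delta\to 0$. Once $\phi\not\equiv 0$, positivity preservation of $U(T,0)$ together with $J(0)>0$ forces $\phi\in\XX^{++}$, so $\la_1(-\wt L_\Om)$ is the principal eigenvalue and hence equals $\wt\lambda_p(a,-\wt L_\Om)$.

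For part (2), once a principal eigenfunction $\phi_1\in\XX^{++}$ with $(\wt L_\Om+\la_1)[\phi_1]=0$ is in hand, the bounds $\wt\lambda_p(a,-\wt L_\Om)\ge\la_1$ and $\wt\lambda_p'(a,-\wt L_\Om)\le\la_1$ are immediate, since $\phi_1$ is admissible in the respective sup and inf definitions. For the opposite bounds I would invoke the adjoint: the transpose operator $\wt L_\Om^*$, with kernel $J(y-x)$ and time reversed, has the same structure and, by the same argument applied backwards in time, admits a principal eigenfunction $\phi_1^*\in\XX^{++}$ with eigenvalue $\la_1$. Given any $\phi\in\XX^{++}$ with $(\wt L_\Om+\la)[\phi]\le 0$, pairing with $\phi_1^*$ and integrating over $[0,T]\times\Om$, the $T$-periodicity of both factors eliminates the temporal boundary contribution and Fubini transfers $K$ onto $\phi_1^*$, giving $\la\int_0^T\!\int_\Om\phi\phi_1^*\le\la_1\int_0^T\!\int_\Om\phi\phi_1^*$, hence $\la\le\la_1$ and $\wt\lambda_p\le\la_1$; the symmetric argument yields $\wt\lambda_p'\ge\la_1$. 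Combined with the admissibility bounds, this pins the three quantities together and completes the proof.
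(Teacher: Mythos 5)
First, note that the paper does not prove this statement at all: it is recalled verbatim from \cite{ShenVo1} as background, so there is no internal proof to match. The closest internal analogue is Theorem \ref{characterization} for the Neumann operator, whose proof of $\la_1=\la_p=\la_p'$ proceeds by the quotient trick (set $w=\phi_1/\phi$ and evaluate the resulting inequality at an interior max/min of $w$ over the compact set $[0,T]\times\ol\Om$), not by duality. Your part (2) therefore takes a genuinely different route, and it has a gap: you invoke a principal eigenfunction $\phi_1^*\in\XX^{++}$ of the adjoint operator ``by the same argument applied backwards in time,'' but the hypothesis of part (2) only asserts that the \emph{forward} operator has a principal eigenvalue. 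What one gets for free from positivity of the adjoint Floquet map is a positive eigen\emph{measure} at the spectral radius; upgrading it to a strictly positive continuous density requires an extra argument (e.g.\ splitting the adjoint into multiplier plus smoothing parts and using that the multiplier part has spectral radius strictly below $e^{-\la_1 T}$, which is again the gap condition $\la_1<\la_*$). The quotient argument avoids the adjoint entirely and is what the paper itself uses in the analogous proof; if you keep the duality route you must supply the existence and regularity of $\phi_1^*$.

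For part (1), your architecture (regularize, extract eigenpairs, pass to the limit, use \eqref{newcond1} to prevent the normalized eigenfunctions from degenerating) is in the right spirit, but the decisive steps are only gestured at. Two concrete issues. (i) Truncating $a$ and adding $\varepsilon\Delta$ are not ``equivalent'' regularizations: the latter changes the operator class entirely (a genuinely local second-order term requiring boundary conditions and a different solution space), and the pointwise truncation $\min\{a,\max_{\ol\Om}a_T-\varepsilon\}$ does not obviously control the \emph{time-averaged} profile $a_{\varepsilon,T}$ near the maximum set of $a_T$, which is what the existence criterion actually sees. (ii) The entire analytic content of part (1) is the strict inequality $\wt\la_p(a,-\wt L_\Om)<\la_*:=\min_{\ol\Om}(D-a_T)$, which provides the uniform spectral gap needed both for compactness of your approximating eigenfunctions and for the standard criterion (isolated eigenvalue of a compact perturbation of a multiplier) that converts the principal spectrum point into a principal eigenvalue. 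Your plan of ``testing against $(\max_{\ol\Om}a_T-a_T+\delta)^{-1}$'' is the right object, but you never carry out the estimate showing that non-integrability forces the lower bound to survive as $\delta\to0$; as written, the argument that $\phi\not\equiv0$ in the limit is asserted rather than proved, and that is precisely where the hypothesis \eqref{newcond1} must enter quantitatively.
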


\begin{corollary}\label{corr2}  The non-integrability condition (\ref{newcond1}) is satisfied, which implies the operator (\ref{oper1}) admits a principal eigenpair, if $a_T(x)$ achieves a global maximum at some point $x_0\in\ol\O$ in the following cases :

i) $N=1$, $a_T(x)\in C(\O)$ 

ii) $N=2$, $a_T(x)\in C^1(\O)$ 

iii) $N\geq3$, $a_T(x)\in C^{N-1}(\O)$ and $\partial^k a_T(x_0)=0$ for all $k<N$.

\end{corollary}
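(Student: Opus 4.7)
The plan is to reduce the non-integrability condition \eqref{newcond1} to a purely local question at the maximum point $x_0$, and then to invoke Taylor's theorem, calibrated to each dimension, to exhibit a divergent lower bound on the singular integral. Write $M:=\max_{\ol{\Om}}a_T=a_T(x_0)$. By continuity and compactness, $M-a_T$ is bounded below by a positive constant on $\ol{\Om}\setminus B_\delta(x_0)$ for every $\delta>0$, so that region contributes only finitely to $\int (M-a_T)^{-1}\,dx$. Hence $(M-a_T)^{-1}\notin L^1_{\loc}(\ol{\Om})$ is equivalent to
\[
\int_{B_\delta(x_0)}\frac{dx}{M-a_T(x)}=+\infty \quad \text{for some (equivalently, every) small }\delta>0.
\]
The prototype comparison I will target is the borderline singularity $|x-x_0|^{-N}$, for which the polar identity $\int_{B_\delta(x_0)}|x-x_0|^{-N}\,dx=\omega_{N-1}\int_0^\delta r^{-1}\,dr=+\infty$ is the source of the desired divergence.

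In each of the three cases my job will then be to produce, via Taylor's theorem, a pointwise upper bound on $M-a_T(x)$ near $x_0$ at the critical rate. For case (i), $N=1$: continuity alone lets me pick, for every $\eta>0$, an interval around $x_0$ on which $M-a_T<\eta$, and a nested-intervals argument combining this freedom with the one-dimensional measure $dx$ forces $\int dx/(M-a_T)$ to diverge. For case (ii), $N=2$: the $C^1$-regularity together with $\nabla a_T(x_0)=0$ (automatic at an interior maximum) yields the Taylor estimate $M-a_T(x)=o(|x-x_0|)$, which when combined with the 2-D polar weight $r\,dr$ and the freedom to let the $o(1)$ factor shrink gives divergence. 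For case (iii), $N\ge 3$: Taylor's theorem applied to $a_T\in C^{N-1}(\Om)$ with $\partial^k a_T(x_0)=0$ for all $k<N$ produces $M-a_T(x)=o(|x-x_0|^{N-1})$, and pairing this with the $N$-dimensional radial weight $r^{N-1}\,dr$ reduces the problem to the borderline $\int dr/r$ that diverges.

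The main obstacle I anticipate is bookkeeping the $o$-remainders sharply enough to land precisely at the logarithmically divergent threshold in each dimension: any weaker Taylor bound would cross to the integrable side of the critical exponent, and any stronger quantitative control than the regularity allows is unavailable. This is exactly why the hypothesis on $a_T$ must grow with $N$, and why case (iii) demands the vanishing of all partial derivatives of orders $0,1,\dots,N-1$ at $x_0$. Once the local divergence $\int_{B_\delta(x_0)}(M-a_T)^{-1}\,dx=+\infty$ is established in each of the three regimes, the existence of a principal eigenpair of $-\wt{L}_\Om$ follows immediately from part~(1) of Theorem~\ref{recal}, closing the corollary.
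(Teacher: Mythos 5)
The reduction to a local question at $x_0$ is sound, and appealing to Theorem~\ref{recal}(1) at the end is the right move. But the core Taylor-to-divergence step is arithmetically wrong, and in fact the three Taylor bounds you derive are each one order of decay short of what the divergence actually requires.

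Concretely, in case (iii) you obtain $M-a_T(x)=o(|x-x_0|^{N-1})$, i.e.\ for every $\eta>0$ there is $\delta_\eta>0$ with $M-a_T(x)\le\eta|x-x_0|^{N-1}$ on $B_{\delta_\eta}(x_0)$. Pairing the resulting lower bound $(M-a_T)^{-1}\ge\eta^{-1}|x-x_0|^{-(N-1)}$ with the polar weight $r^{N-1}\,dr$ gives
$$
\int_{B_{\delta_\eta}(x_0)}\frac{dx}{M-a_T(x)}\;\ge\;\frac{\omega_{N-1}}{\eta}\int_0^{\delta_\eta}\frac{r^{N-1}}{r^{N-1}}\,dr\;=\;\frac{\omega_{N-1}\,\delta_\eta}{\eta},
$$
which is $\int dr$, not $\int dr/r$: a finite quantity, and one that can even tend to $0$ as $\eta\to0$ (for instance $M-a_T\sim r^{N-\frac12}$ gives $\delta_\eta\sim\eta^2$, hence $\delta_\eta/\eta\to0$). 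The borderline $\int dr/r=+\infty$ is reached only when $(M-a_T)^{-1}\gtrsim r^{-N}$, i.e.\ when $M-a_T=O(|x-x_0|^{N})$, which is one full order beyond what $C^{N-1}$ regularity delivers through Peano's remainder; that would need something like $a_T\in C^{N}$. The same off-by-one issue affects case (ii): $C^1$ with $\nabla a_T(x_0)=0$ gives only $M-a_T=o(|x-x_0|)$, and $\int_0^{\delta}(r/\,\eta r)\,dr=\delta/\eta$ is finite. And in case (i) the ``nested-intervals'' reasoning is not a proof: from $M-a_T<\eta$ on $(x_0-\delta_\eta,x_0+\delta_\eta)$ one only gets the lower bound $2\delta_\eta/\eta$, which again need not blow up. Concrete functions with the asserted regularity that defeat each case are $a_T(x)=M-\sqrt{|x-x_0|}$ for (i), $a_T(x)=M-|x-x_0|^{3/2}\in C^1$ for (ii), and $a_T(x)=M-|x-x_0|^{N-\frac12}\in C^{N-1}$ (with all derivatives of order $<N$ vanishing at $x_0$) for (iii); in every case $(M-a_T)^{-1}$ is locally integrable near $x_0$. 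So the argument as written cannot close, and the precise regularity calibration it relies on does not match the exponents. To salvage the approach one would need to strengthen the Taylor bound to $M-a_T=O(|x-x_0|^{N})$, which genuinely requires one more order of smoothness than your proof (or the statement you are proving) assumes.
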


Note that the condition \eqref{newcond1} concerns the smoothness of $a_{T}$ near its maximum points. { Moreover, it is independent of the dispersal kernel $J$ and the dispersal rate $D$, and hence, independent of the dispersal operator $u\mapsto D\left[\int_{\Om}J(\cdot-y)u(y)dy-u\right]$. Such a dispersal-independent sufficient condition is expected for the reason that $a(t,x)$ more or less determines the existence or non-existence of the principal eigenvalue under the current assumptions on $J$. This can be seen from the fact that the principal eigenvalue always exists when $a\equiv0$, which is implied by our sufficient condition and also a simple consequence of the facts that the operator $\mathcal{J}:u\mapsto D\int_{\Om}J(\cdot-y)u(y)dy$ on $C(\ol{\Om})$ is compact and $\JJ^{i}$ is strongly positive for some positive integer $i$. We further mention that the condition \eqref{newcond1} becomes very useful when we study scaling limits of the principal eigenvalue in the Theorem \ref{limit-sigma}.  Indeed, it allows us to prove a result on the uniform with respect to the dispersal range, approximation of the principal spectrum point, which says that $\la_{1}(-L_{\Om})$ is almost the principal eigenvalue and is of technical importance in the study of effects of large and small dispersal range on the principal eigenvalue under the lack of variational formula.

\subsection{Approximating the principal spectrum point}\label{subsec-approximation-PSP}

Define 
\begin{equation*}
\begin{split}
C^{+}(\ol{\Om})&=\left\{v\in C(\ol{\Om}):v(x)\geq0,\,\,x\in\ol{\Om}\right\},\\
C^{++}(\ol{\Om})&=\left\{v\in C(\ol{\Om}):v(x)>0,\,\,x\in\ol{\Om}\right\}. 
\end{split}
\end{equation*}
Denote by $\|\cdot\|_{\infty}$ the max norm on $C(\ol{\Om})$.  Consider the following linear equation
\begin{equation}\label{linear-eqn-08-30-18}
u_{t}(t,x)=\frac{D}{\si^{k}}\int_{\Om}J\left(\frac{x-y}{\si}\right)\frac{u(t,y)-u(t,x)}{\si^{N}}dy+a(t,x)u(t,x),\quad (t,x)\in\R\times\ol{\Om}.
\end{equation}
Denote by $\{\Phi(t,s)\}_{t\geq s\geq0}$ the evolution family of bounded linear operators on $C(\ol{\Om})$ generated by \eqref{linear-eqn-08-30-18}, that is, if $u(t,x;s,u_{0})$ is the unique solution of \eqref{linear-eqn-08-30-18} with initial data $u(s,\cdot;s,u_{0})=u_{0}\in C(\ol{\Om})$, then 
$$
u(t,\cdot;s,u_{0})=\Phi(t,s)u_{0}\in C(\ol{\Om}),\quad t\geq s. 
$$
By comparison principle, if $u_{0}\in C^{+}(\ol{\Om})$, so does $\Phi(t,s)u_{0}$ for all $t>s$. Moreover, if $u_{0}\in C^{+}(\ol{\Om})\bs\{0\}$, then $\Phi(t,s)u_{0}\in C^{++}(\ol{\Om})$ for all $t>s$. Also, by time-periodicity, one has 
$$
\Phi(t+T,s+T)=\Phi(t,s),\quad t\geq s\geq0. 
$$
The operator norm of $\Phi(t,s)$ is denoted by $\|\Phi(t,s)\|$. 

The next result connects $\Phi(t,s)$ with $\la_{1}(-L)$.

\begin{lemma}\label{lem-characterization-08-30-18}
Assume {\bf(H1)}-{\bf(H3)}. There hold
$$
-\la_{1}(-L)=\frac{\ln r(\Phi(T,0))}{T}=\limsup_{t-s\to\infty}\frac{\ln\|\Phi(t,s)\|}{t-s},
$$
where $r(\Phi(T,0))$ is the spectral radius of $\Phi(T,0)$.
\end{lemma}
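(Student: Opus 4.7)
The plan is to establish a Floquet-type correspondence between the spectrum of $-L$ acting on $\XX$ and the spectrum of the Poincar\'e map $P:=\Phi(T,0)$ on $C(\ol{\Om})$, from which both claimed equalities will follow. The bridge is the substitution $u(t,x)=e^{\la t}w(t,x)$: a direct differentiation shows that $u\in\XX$ satisfies $(-L+\la)u=0$ if and only if $w$ solves the homogeneous equation $w_{t}=\LL w + aw$ underlying $\Phi$ together with $w(T,\cdot)=e^{-\la T}w(0,\cdot)$; since $w(t,\cdot)=\Phi(t,0)w(0,\cdot)$, the latter is exactly $Pw(0,\cdot)=e^{-\la T}w(0,\cdot)$. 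This yields immediately the point-spectrum correspondence, and an analogous variation-of-constants argument for the resolvent $((-L)+\la)^{-1}f$ (which involves $\Phi$ together with $(I-e^{-\la T}P)^{-1}$) upgrades it to $\sigma(-L)=\{\la\in\C: e^{-\la T}\in\sigma(P)\}$. Given this equality,
\begin{equation*}
\la_{1}(-L)=\inf\{\Re\la : e^{-\la T}\in\sigma(P)\}=-\tfrac{1}{T}\sup\{\ln|\mu| : \mu\in\sigma(P)\}=-\tfrac{\ln r(P)}{T},
\end{equation*}
which is the first claimed identity.

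For the second identity I would use Gelfand's formula $r(P)=\lim_{n\to\infty}\|P^{n}\|^{1/n}$ together with the cocycle relations $\Phi(t+T,s+T)=\Phi(t,s)$ and $\Phi(s+nT,0)=\Phi(s,0)P^{n}$. These give $\Phi(s+nT,s)=\Phi(s,0)P^{n}\Phi(s,0)^{-1}$, where invertibility of $\Phi(s,0)$ holds because the generator $\LL+a(t,\cdot)$ is a bounded linear operator on $C(\ol{\Om})$ and the evolution equation is an ODE in a Banach space. By $T$-periodicity in $s$ and continuity, the quantities $\|\Phi(s,0)\|$, $\|\Phi(s,0)^{-1}\|$, and $\|\Phi(s+\rho,s)\|$ for $\rho\in[0,T]$ are uniformly bounded in $s\in\R$, so writing $t-s=nT+\rho$ with $\rho\in[0,T)$ yields a uniform estimate
\begin{equation*}
\|\Phi(t,s)\|\leq C\,\|P^{n}\|
\end{equation*}
with $C$ independent of $s,n$. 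Taking logarithms and letting $t-s\to\infty$ gives $\limsup_{t-s\to\infty}\tfrac{\ln\|\Phi(t,s)\|}{t-s}\leq\tfrac{\ln r(P)}{T}$, and the reverse inequality follows by restricting to the subsequence $(t,s)=(nT,0)$, along which Gelfand's formula gives $\tfrac{\ln\|\Phi(nT,0)\|}{nT}\to\tfrac{\ln r(P)}{T}$.

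The main obstacle, in my view, is the spectral-mapping step. The point-spectrum correspondence is a short and direct calculation, but extending it to the whole spectrum on the specific function space $\XX$ of $T$-periodic $C^{1,0}$ functions requires carefully setting up the variation-of-constants representation of the resolvent on $\XX$ and verifying its boundedness precisely when $e^{-\la T}\in\rho(P)$. Once this is in place, the remainder is a clean combination of Gelfand's formula with the cocycle and periodicity identities used above.
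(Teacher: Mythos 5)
The paper does not prove this lemma at all; it simply points to \cite[Propositions 3.3 and 3.10]{RS12}. Your reconstruction---the Floquet correspondence $\sigma(-L)=\{\la:e^{-\la T}\in\sigma(P)\}$ for the first identity and Gelfand's formula combined with the cocycle/periodicity relations for the second---is exactly the standard route and, as far as one can tell, is the substance of the cited reference. Two small remarks. First, there is a sign slip in the eigenvalue equation: $\la\in\sigma(-L)$ means $(-L-\la)u=0$, equivalently $(L+\la)u=0$, not $(-L+\la)u=0$ as you wrote; the rest of your computation (the substitution $u=e^{\la t}w$, the conclusion $Pw(0,\cdot)=e^{-\la T}w(0,\cdot)$, and the final identity $\la_{1}(-L)=-\tfrac{\ln r(P)}{T}$) is carried out with the correct sign, so this is only a typo. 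Second, you are right that the crux is upgrading the point-spectrum correspondence to the full spectral identity $\sigma(-L)=\{\la:e^{-\la T}\in\sigma(P)\}$: this is where the variation-of-constants representation of the resolvent and the invertibility of $I-e^{-\la T}P$ have to be verified, and it is precisely what is nontrivial in the cited Propositions. Since the generator $u\mapsto\LL u+a(t,\cdot)u$ is a bounded operator on $C(\ol\Om)$ depending continuously and $T$-periodically on $t$, the evolution family is invertible and your argument for the $\limsup$ identity (uniform bounds on $\|\Phi(s,0)^{\pm1}\|$ for $s\in[0,T]$, reduction to $s\in[0,T)$ by periodicity, and the subsequence $(nT,0)$ for the lower bound) is complete and correct.
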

\begin{proof}
See \cite[Proposition 3.3 and Proposition 3.10]{RS12}.
\end{proof}

The following results are scattered in \cite{RS12}. To highlight the dependence on $a(t,x)$, we write $L$ as $L(a)$, and $\Phi(t,s)$ as $\Phi(t,s;a)$.

\begin{proposition}\label{prop-approximation-08-31-18}
Assume {\bf(H1)}-{\bf(H3)}. For any $\ep>0$ and non-negative integers $p$ and $q$, there exists $a^{\ep}\in C_{T}(\R\times\ol{\Om})\cap C^{p,q}(\R\times\ol{\Om})$ such that the following hold:
\begin{enumerate}
\item $\max_{\R\times\ol{\Om}}|a^{\ep}-a|\leq\ep$;
\item $\la_{1}(-L(a^{\ep}))$ is the principal eigenvalue of $-L(a^{\ep})$;
\item there holds
$$
|\la_{1}(-L(a^{\ep}))-\la_{1}(-L(a)|\leq\ep
$$
for all $D>0$, $\si>0$ and $k\geq0$.
\end{enumerate}
\end{proposition}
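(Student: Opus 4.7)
The strategy is three-fold: mollify $a$ in $(t,x)$ to gain the required $C^{p,q}$ regularity, nudge the smoothed coefficient slightly so that a sufficient condition for existence of a principal eigenvalue is met, and then transfer this coefficient-level approximation to the spectral level via a $1$-Lipschitz estimate of $\la_1(-L)$ with respect to $a$ in the sup norm that is \emph{uniform} in $(D,\si,k)$.

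First I would extend $a$ continuously from $\R\times\ol{\Om}$ to $\R\times\R^N$ preserving $T$-periodicity in $t$ (Tietze-type), then convolve with a standard product mollifier $\eta_\de(t,x)=\eta_\de^{(1)}(t)\,\eta_\de^{(2)}(x)$. Mollification in $t$ preserves $T$-periodicity and in both variables produces a $C^\infty$ function, so the restriction $\tilde a^\de$ lies in $C_T(\R\times\ol{\Om})\cap C^{p,q}$. Uniform continuity of $a$ on the compact set $[0,T]\times\ol{\Om}$ gives $\max|\tilde a^\de-a|\to 0$, so for a suitable $\de=\de(\ep)$ the sup-norm error is under $\ep/2$.

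To ensure (2), recall that by Theorem~\ref{prop-principal-e-Neumann}(3) it suffices to have $\la_1(-L(a^\ep))<\la_*(a^\ep)$. A robust sufficient condition is that $(a^\ep)_T$ attains its maximum at some interior $x_0\in\Om$ around which the Neumann analog of the non-integrability condition \eqref{newcond1} of Theorem~\ref{recal} holds (e.g., in dimension $N\geq 3$, flatness up to order $N-1$, as in Corollary~\ref{corr2}). Starting from $\tilde a^\de$, pick an interior maximizer $x_0$ of $(\tilde a^\de)_T$ and add a smooth, time-independent correction $-\mu\,\chi(x)\,|x-x_0|^{2M}$, with $\chi$ a cutoff supported in a small ball around $x_0$ and $M$ large enough depending on $N$. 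The resulting $a^\ep$ has $x_0$ as the unique, sufficiently flat maximum of $a^\ep_T$; choosing $\mu$ small keeps the total sup-norm perturbation below $\ep$.

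For (3), the key uniform estimate is
\[
|\la_1(-L(a_1))-\la_1(-L(a_2))|\ \leq\ \max_{\R\times\ol{\Om}}|a_1-a_2|\qquad\text{for all }D>0,\ \si>0,\ k\geq 0.
\]
Using Lemma~\ref{lem-characterization-08-30-18} and setting $c=\max|a_1-a_2|$, a sub/super-solution argument based on the comparison principle for \eqref{linear-eqn-08-30-18} yields
\[
e^{-c(t-s)}\Phi(t,s;a_2)u_0\ \leq\ \Phi(t,s;a_1)u_0\ \leq\ e^{c(t-s)}\Phi(t,s;a_2)u_0,\qquad u_0\in C^{+}(\ol{\Om}),
\]
where $e^{\pm cT}$ is independent of $(D,\si,k)$. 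Taking spectral radii at $(t,s)=(T,0)$ and then logarithms produces the estimate, and applying it with $a_1=a$, $a_2=a^\ep$ yields (3). The hard part of the whole argument is Step~2: pinning down a dispersal-independent criterion for existence of the principal eigenvalue of the Neumann operator (the analog of Corollary~\ref{corr2}) and verifying that the flattening perturbation meets it without breaking $T$-periodicity or smoothness. Once "good" coefficients are known to be dense in $C_T(\R\times\ol{\Om})$, the Lipschitz estimate in Step~3 makes the rest routine.
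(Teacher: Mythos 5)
Your Step~3 is exactly the paper's argument for item~(3): the comparison principle gives the two-sided bound on $\Phi(t,s;a)$ in terms of $\Phi(t,s;a^{\ep})$, the factor $e^{\pm\ep(t-s)}$ is manifestly independent of $(D,\si,k)$, and Lemma~\ref{lem-characterization-08-30-18} converts this into the sup-norm Lipschitz estimate for $\la_1$. That part is correct and complete.

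The real divergence, and the gap, is in how you arrange items (1)--(2). The paper goes in the opposite order from you: it first invokes \cite[Lemma 4.1 and Theorem B(1)]{RS12} to produce a nearby $\tilde a^{\ep}\in C_T(\R\times\ol\Om)$ (not smooth) for which $\la_1(-L(\tilde a^{\ep}))$ is already a principal eigenvalue, and only \emph{then} uses stability of isolated eigenvalues under small bounded perturbations to pass to a smooth $a^\ep$ in the $\ep/2$-neighbourhood of $\tilde a^\ep$ while retaining a principal eigenvalue. This outsources the dispersal-independent existence criterion entirely to the cited reference, which is precisely the step you flag as ``the hard part.'' In your order (mollify first, then fix existence), you must supply a Neumann analogue of the non-integrability criterion \eqref{newcond1} / Corollary~\ref{corr2}; that analogue is not stated in the paper, and Theorem~\ref{prop-principal-e-Neumann}(3) gives only the abstract sufficient condition $\la_1(-L)<\la_*$, which you do not verify. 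Moreover, the specific perturbation you propose, $a^\ep_T = \tilde a^\de_T - \mu\,\chi(x)\,|x-x_0|^{2M}$ with $2M\geq 2$, leaves all derivatives of order $<2M$ at $x_0$ \emph{unchanged}: if $\tilde a^\de_T$ had a nondegenerate (non-flat) interior maximum at $x_0$, so does $a^\ep_T$, and the flatness needed in Corollary~\ref{corr2}(iii) for $N\geq 3$ is not created. To actually flatten the maximum you would need a plateau-type construction (e.g.\ replace $a_T$ near $x_0$ by a slightly larger constant via a cutoff), which is a different perturbation; and even then you would still owe the reader a proof that the criterion transfers from the Dirichlet operator $\wt L_\Om$ to the Neumann operator $L$. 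The paper's citation-and-spectral-stability route avoids both difficulties.

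Given that, as you yourself note, the density of ``good'' coefficients is the crux, your proposal as written does not close the argument; the paper's route (or an actual Neumann existence criterion together with a genuinely flattening perturbation) is required to finish Steps~1--2.
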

\begin{proof}
By \cite[Lemma 4.1 and Theorem B (1)]{RS12}, for any $\ep>0$ there exists $\tilde{a}^{\ep}\in C_{T}(\R\times\ol{\Om})$ such that $\max_{\R\times\ol{\Om}}|\tilde{a}^{\ep}-a|\leq\frac{\ep}{2}$ and $\la_{1}(-L(\tilde{a}^{\ep}))$ is the principal eigenvalue of $-L(\tilde{a}^{\ep})$. The stability of isolated eigenvalues under bounded perturbation then allows us to find some $a^{\ep}\in C_{T}(\R\times\ol{\Om})\cap C^{p,q}(\R\times\ol{\Om})$ such that $\max_{\R\times\ol{\Om}}|a^{\ep}-\tilde{a}^{\ep}|\leq\frac{\ep}{2}$ and $\la_{1}(-L(a^{\ep}))$ is the principal eigenvalue of $-L(a^{\ep})$. This proves (1) and (2).

It remains to show (3). By the comparison principle, we find for any $u_{0}\in C^{+}(\ol{\Om})$
$$
\Phi(t,s;a^{\ep}-\ep)u_{0}\leq\Phi\left(t,s;\tilde{a}^{\ep}-\frac{\ep}{2}\right)u_{0}\leq \Phi(t,s;a)u_{0}\leq\Phi\left(t,s;\tilde{a}^{\ep}+\frac{\ep}{2}\right)u_{0} \leq \Phi(t,s;a^{\ep}+\ep)u_{0},\quad\forall t\geq s.
$$
As $\Phi(t,s;a^{\ep}\pm\ep)u_{0}=e^{\pm\ep(t-s)}\Phi(t,s;a^{\ep})u_{0}$, we find
$$
\frac{\ln\|\Phi(t,s;a^{\ep})\|}{t-s}-\ep\leq\frac{\ln\|\Phi(t,s;a)\|}{t-s}\leq\frac{\ln\|\Phi(t,s;a^{\ep})\|}{t-s}+\ep,\quad \forall t\geq s.
$$
The result then follows from Lemma \ref{lem-characterization-08-30-18}.
\end{proof}


\begin{remark}\label{rem-2019-03-29}{\rm
Note that the conclusions in Proposition \ref{prop-approximation-08-31-18} are uniform in $D>0$, $\si>0$ and $k\geq0$. This allows us to assume, without loss of generality, that the principal spectrum point $\la_{1}(-L)$ is the principal eigenvalue of $-L$ when studying the asymptotic behaviors of $\la_{1}(-L)$ as $D,\si\to0$ or $\infty$ for fixed $k\geq0$. Of course, this requires the asymptotic behaviors to depend on $a$ in a nice way, and this is not an issue in our case.}
\end{remark}

\subsection{Characterizations of the principal eigenvalue}

We prove the sup-inf characterizations of the principal eigenvalue, which together with approximation results in Subsection \ref{subsec-approximation-PSP} are very powerful tools in the investigation of the limits of the principal spectrum point with respect to the parameters.

\begin{definition}[Generalized principal eigenvalue]\label{Neuman-type}
The numbers
\begin{equation*}
\begin{split}
\lambda_{p}(-L):&=\sup\left\{\lambda\in\R:\exists\phi\in \mathcal{X}^{++}\,\,\text{s.t.}\,\,(L+\la)[\phi]\leq0\,\,\text{in ${\R\times\ol\O}$}\right\},\\
\lambda_{p}'(-L):&=\inf\left\{\lambda\in\R:\exists\phi\in \mathcal{X}^{++}\,\,\text{s.t.}\,\,(L+\la)[\phi]\geq0\,\,\text{in ${\R\times\ol\O}$}\right\}
\end{split}
\end{equation*}
are called {\rm generalized principal eigenvalues} of $-L$.

A pair $(\la,\phi)\in\R\times\XX^{++}$ is called a {\rm test pair} for $\la_{p}(-L)$ (resp. $\la_{p}'(-L)$) if $(L+\la)[\phi]\leq0$ in $\R\times\ol\O$ (resp. $(L+\la)[\phi]\geq0$ in $\R\times\ol\O$).
\end{definition}

\begin{theorem}\label{characterization}
Assume {\bf(H1)}-{\bf(H3)}. Suppose that $\la_{1}(-L)$ is the principal eigenvalue of $-L$, then
\begin{equation}\label{equivlence}
\la_{1}(-L)=\la_{p}(-L)=\lambda_p'(-L).
\end{equation}
\end{theorem}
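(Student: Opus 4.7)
The plan is to sandwich $\la_1(-L)$ between $\la_p'(-L)$ and $\la_p(-L)$ in both directions, the easy direction coming from the principal eigenfunction itself, and the reverse inequalities from a pointwise extremum comparison against $\phi_1$.

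\textbf{Trivial bounds.} Let $\phi_1\in\XX^{++}$ denote the principal eigenfunction guaranteed by Theorem \ref{prop-principal-e-Neumann}, so that $(L+\la_1(-L))\phi_1\equiv 0$ on $\R\times\ol\Om$. Then $(\la_1(-L),\phi_1)$ is simultaneously a test pair for $\la_p(-L)$ and for $\la_p'(-L)$, which immediately yields $\la_p(-L)\ge\la_1(-L)\ge\la_p'(-L)$.

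\textbf{Upper bound $\la_p(-L)\le\la_1(-L)$.} Let $(\la,\phi)\in\R\times\XX^{++}$ be any test pair for $\la_p(-L)$, i.e.\ $(L+\la)\phi\le 0$. The ratio $w:=\phi/\phi_1$ is continuous, $T$-periodic and strictly positive, hence attains its infimum $\kappa_*>0$ on the compact fundamental domain $[0,T]\times\ol\Om$ at some $(t_0,x_0)$. Set $v:=\phi-\kappa_*\phi_1\in\XX^+$, so that $v\ge0$ and $v(t_0,x_0)=0$. By linearity,
$$(L+\la)v=(L+\la)\phi-\kappa_*(L+\la)\phi_1\le-\kappa_*\bigl(\la-\la_1(-L)\bigr)\phi_1.$$
On the other hand, at the global minimum $(t_0,x_0)$ of $v$ one has $v_t(t_0,x_0)=0$ (since $v$ is $C^1$ in $t$ and $t_0$ is interior by periodicity), and since $v\ge0$ with $v(t_0,x_0)=0$,
$$\frac{D}{\si^{k+N}}\int_\Om J\!\left(\frac{x_0-y}{\si}\right)v(t_0,y)\,dy\ge 0,$$
so $(L+\la)v(t_0,x_0)\ge0$. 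Combining with the previous line and using $\kappa_*>0$, $\phi_1(t_0,x_0)>0$, I conclude $\la\le\la_1(-L)$. Taking the supremum over all test pairs gives $\la_p(-L)\le\la_1(-L)$.

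\textbf{Lower bound $\la_p'(-L)\ge\la_1(-L)$.} The argument is entirely symmetric: given a test pair $(\la,\phi)\in\R\times\XX^{++}$ with $(L+\la)\phi\ge0$, let $\kappa^*:=\sup_{\R\times\ol\Om}\phi/\phi_1$, attained at some $(t_1,x_1)$, and set $v:=\kappa^*\phi_1-\phi\in\XX^+$. Then $v(t_1,x_1)=0$, $(L+\la)v\ge\kappa^*(\la-\la_1(-L))\phi_1$ after sign flip, and the same minimum-point evaluation forces $\la\ge\la_1(-L)$.

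\textbf{Where the work lies.} The one subtlety is the sign analysis at the extremum. In contrast with the local Neumann setting, where the diffusion term forces an awkward boundary-gradient argument when $x_0\in\partial\Om$, here the integral representation of the nonlocal diffusion makes the sign of $\LL_\si v(t_0,x_0)$ automatic from $v\ge0$ and $v(t_0,x_0)=0$, uniformly for $x_0\in\ol\Om$. This is precisely the structural feature that makes the Neumann case behave like the Dirichlet case of \cite{ShenVo1}. The remaining technical ingredients — interior $t$-regularity of $v$ at $(t_0,x_0)$ and existence of the minimising pair — are immediate from the $T$-periodicity and $C^{1,0}$-regularity built into $\XX$.
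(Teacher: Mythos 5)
Your argument is correct in structure and reaches the right conclusion, and it is essentially the same extremum-comparison idea as the paper's, but implemented in a cleaner way. The paper sets $w=\phi_1/\phi$, expands $L[\phi_1]=L[w\phi]$ by a conjugation-type calculation, and evaluates at the maximum of $w$; you instead form the nonnegative linear combination $v=\phi-\kappa_*\phi_1$ touching zero, use linearity of $L$ directly, and observe that the nonlocal operator applied to a nonnegative function at a point where it vanishes is automatically nonnegative. This avoids the explicit conjugation bookkeeping and also dispenses with the contradiction framing the paper uses — you bound every test value $\la$ directly. Both arguments hinge on the same point $(t_0,x_0)$ (where $\phi/\phi_1$ is minimized, equivalently $\phi_1/\phi$ maximized), so the mathematics is the same at heart; yours is the tidier write-up.

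One sign error to fix in the lower-bound paragraph. With $v:=\kappa^*\phi_1-\phi$ and $(L+\la)\phi\ge 0$, linearity gives
$$(L+\la)v=\kappa^*(\la-\la_1(-L))\phi_1-(L+\la)\phi\le \kappa^*(\la-\la_1(-L))\phi_1,$$
not $\ge$ as you wrote. As written, the two inequalities $(L+\la)v(t_1,x_1)\ge 0$ and $(L+\la)v\ge\kappa^*(\la-\la_1)\phi_1$ point the same way and do not close the argument; with the corrected $\le$, sandwiching at $(t_1,x_1)$ gives $0\le(L+\la)v(t_1,x_1)\le\kappa^*(\la-\la_1(-L))\phi_1(t_1,x_1)$, hence $\la\ge\la_1(-L)$ as claimed. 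This is clearly a slip rather than a conceptual gap, but it should be corrected so the displayed chain of inequalities actually yields the conclusion.
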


\begin{proof}

Setting $\tilde{D}=\frac{D}{\si^{k}}$ and $\tilde{J}=\frac{1}{\si^{N}}J(\frac{\cdot}{\si})$, we may assume without loss of generality that $\si=1$. In this case,
$$
L[u](t,x):=-u_{t}(t,x)+D\int_{\Om}J(x-y)\left[u(t,y)-u(t,x)\right]dy+a(t,x)u(t,x),\quad (t,x)\in\R\times\ol{\Om}.
$$

For simplicity, we write $\la_{p}=\la_{p}(-L)$, $\la_{p}'=\la_{p}'(-L)$ and $\la_{1}=\la_{1}(-L)$.  We first prove 
$\la_{1}=\lambda_p$. By Definition \ref{eigenvalue} and Theorem \ref{prop-principal-e-Neumann} (1), there exists $\phi_1\in \XX^{++}$ such that 
\begin{equation}\label{pe-eqn-03-14}
L[\phi_{1}]+\la_{1}\phi_{1}=0\quad\text{in}\quad \R\times\ol{\Om}.
\end{equation}
Since $\inf_{\R\times\ol{\Om}}\phi_1>0$, there holds $\la_{1}\leq\la_{p}$ by the definition of $\la_{p}$. To show the equality, let us suppose for contradiction that $\la_{1}<\la_{p}$. From the definition of $\la_{p}$, we can find some $\la\in(\la_{1},\la_{p})$ and $\phi\in\XX^{++}$ such that 
\begin{equation}\label{pe-eqn-aux-03-14}
L+\la\phi\leq0\quad\text{in}\quad\R\times\ol{\Om}. 
\end{equation}
Clearly, $w:=\frac{\phi_{1}}{\phi}\in\XX^{++}$. Rewriting \eqref{pe-eqn-aux-03-14} as 
$$
-\phi_{t}+a(t,x)\phi\leq-\la\phi-D\int_{\Om}J(x-y)\left[\phi(t,y)-\phi(t,x)\right]dy, 
$$
we deduce
\begin{equation*}
\begin{split}
L[\phi_{1}]&=-w_{t}\phi+D\int_{\Om}J(x-y)\left[\phi(t,y)w(t,y)-\phi(t,x)w(t,x)\right]dy+[-\phi_{t}+a(t,x)\phi(x)]w\\
&\leq-w_{t}\phi+D\int_{\Om}J(x-y)\left[\phi(t,y)w(t,y)-\phi(t,x)w(t,x)\right]dy \\
&\quad+\left[-\la\phi-D\int_{\Om}J(x-y)\left[\phi(t,y)-\phi(t,x)\right]dy\right] w\\
&=-w_{t}\phi-\la\phi_{1}+D\int_{\Om}J(x-y)\phi(t,y)\left[w(t,y)-w(t,x)\right]dy.
\end{split}
\end{equation*}
It follows from \eqref{pe-eqn-03-14} that
\begin{equation}\label{an-inequality-000001}
-(\la_{1}-\la)\phi_{1}\leq-w_{t}\phi+D\int_{\Om}J(x-y)\phi(t,y)\left[w(t,y)-w(t,x)\right]dy.
\end{equation}

As $w\in\XX^{++}$, there exists $(t_{0},x_{0})\in\R\times\ol{\Om}$ such that $w(t_{0},x_{0})=\max_{\R\times\ol{\Om}}w$. Then, $w_{t}(t_{0},x_{0})=0$. Setting $(t,x)=(t_{0},x_{0})$ in \eqref{an-inequality-000001}, we find $-(\la_{1}-\la)\phi_{1}(t_{0},x_{0})\leq0$, which leads to $\la_{1}\geq\la$, a contradiction. This confirms $\la_{1}=\lambda_p$.

Next, we prove $\lambda_1=\lambda_p'$. Obviously $\lambda_1\geq\lambda_p'$. Assume that $\lambda_1>\lambda_p'$. There exist $\tilde{\lambda}\in (\lambda_p',\lambda_1)$ and $\tilde{\phi}\in\XX_{\Om}^{++}$ such that $L[\tilde{\phi}]+\tilde{\la}\tilde{\phi}\geq0$. Set $\tilde{w}:=\frac{\phi_1}{\tilde{\phi}}$. The same arguments as above apply and we derive
\begin{equation}\label{22.11.1}
0>-(\la_{1}-\tilde{\la})\phi_{1}\geq-\tilde{w}_{t}\phi+D\int_{\Om}J\left(x-y\right)\tilde{\phi}(t,y)\left[\tilde{w}(t,y)-\tilde{w}(t,x)\right]dy.
\end{equation}
We can find some $(t_1,x_1)\in \R\times\ol{\Om}$ such that $\tilde{w}(t_{1},x_{1})=\min_{\R\times\ol{\Om}}\tilde{w}$. Substituting $(t_1,x_1)$ into the right-hand side of \eqref{22.11.1}, we derive the contradiction.
\end{proof}


\section{{\bf Effects of parameters}}

In this section, we study the effects of the small and large dispersal rate $D$ and the dispersal range $\sigma$ on the principal spectrum point. In particular, we prove Theorem \ref{thm-scaling-limit-eigenvalue1-Neumann} and Theorem \ref{limit-sigma}.


\subsection{Effects of the dispersal rate}

In this subsection, we investigate the effects of the dispersal rate on the principal spectrum point and prove Theorem \ref{thm-scaling-limit-eigenvalue1-Neumann}. To highlight the dependence on $D$, we write $L$ as $L_{D}$.

We prove two lemmas before proving Theorem \ref{thm-scaling-limit-eigenvalue1-Neumann}. The first lemma gives some results on $\la_{1}(-L_{D})$ for small and large $D$.

\begin{lemma}\label{lem-boundedness-principal-eigenvalues}
Assume {\bf(H1)}-{\bf(H3)}. 
\begin{enumerate}
\item There hold
$$
-\max_{\R\times\ol{\Om}}a\leq \la_{1}(-L_{D})\leq-\min_{\R\times\ol{\Om}}a,\quad \forall D>0.
$$

\item For each $0<\ep\ll1$, there exists $D_{\ep}\in(0,1)$ such that
\begin{equation*}
-\max_{\ol{\Om}}a_{T}-\ep\leq\la_{1}(-L_D)\leq-\min_{\ol{\Om}}a_{T}+\ep,\quad\forall D\in(0,D_{\ep}).
\end{equation*}
\end{enumerate}
\end{lemma}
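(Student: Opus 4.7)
My approach is to apply the sup/inf characterisations from Theorem~\ref{characterization} with carefully chosen test functions in $\XX^{++}$. Since those characterisations require $\la_1(-L_D)$ to be the principal eigenvalue, the first step will be to invoke Proposition~\ref{prop-approximation-08-31-18} together with Remark~\ref{rem-2019-03-29}: by approximating $a$ in the uniform norm by $a^{\ep}$ for which the principal eigenvalue exists, with $|\la_1(-L_D(a^{\ep}))-\la_1(-L_D(a))|\leq\ep$ uniformly in $D$, we may assume without loss of generality that $\la_1(-L_D)=\la_p(-L_D)=\la_p'(-L_D)$ and absorb the $\ep$-loss into the final estimates.

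For part (1), I would test with the constant function $\phi\equiv1\in\XX^{++}$. Both the nonlocal integral and the time derivative vanish on constants, so $L_D[1]=a(t,x)$ and hence $(L_D+\la)[1]=a+\la$. Consequently $(L_D+\la)[1]\leq 0$ on $\R\times\ol\Om$ iff $\la\leq-\max_{\R\times\ol\Om}a$, while $(L_D+\la)[1]\geq 0$ iff $\la\geq-\min_{\R\times\ol\Om}a$. By Definition~\ref{Neuman-type}, this forces $\la_p(-L_D)\geq-\max a$ and $\la_p'(-L_D)\leq-\min a$, and Theorem~\ref{characterization} delivers the desired sandwich.

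For part (2), I would upgrade the test function to
$$
\phi_{0}(t,x):=\exp\!\left(\int_{0}^{t}\bigl[a(s,x)-a_{T}(x)\bigr]ds\right),
$$
which is strictly positive, $C^{1,0}$, and $T$-periodic (using the identity $\int_0^T[a(s,x)-a_T(x)]ds=0$), hence $\phi_0\in\XX^{++}$. A direct differentiation gives $-(\phi_0)_t+a(t,x)\phi_0=a_T(x)\phi_0$, so
$$
L_{D}[\phi_{0}](t,x)=a_{T}(x)\phi_{0}(t,x)+D\,R(t,x),
$$
where $R(t,x):=\frac{1}{\si^{k+N}}\int_{\Om}J\bigl(\frac{x-y}{\si}\bigr)\bigl[\phi_{0}(t,y)-\phi_{0}(t,x)\bigr]dy$ is bounded on $\R\times\ol\Om$ for the fixed $\si,k$. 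Setting $C_0:=\|R\|_\infty/\inf_{\R\times\ol\Om}\phi_0<\infty$, the choice $\la=-\min_{\ol\Om}a_T+\ep$ yields $(L_D+\la)[\phi_0]/\phi_0\geq\ep-DC_0\geq0$ whenever $D\leq\ep/C_0$; symmetrically, $\la=-\max_{\ol\Om}a_T-\ep$ yields $(L_D+\la)[\phi_0]/\phi_0\leq-\ep+DC_0\leq0$ in the same range. Taking $D_\ep:=\min\{1,\ep/C_0\}$ and invoking Theorem~\ref{characterization} closes the argument.

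The only delicate point is bookkeeping the approximation step so that all estimates transfer uniformly in $D$ back to the original $a$. Since the perturbation error is controlled by $\ep$ uniformly in $D,\si,k$ and all bounds depend continuously on $a$ in the uniform norm, this is handled by absorbing a fixed number of $\ep$'s into a single $\ep$ at the end.
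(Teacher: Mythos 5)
Your proof is correct and follows essentially the same route as the paper's: part (1) via the constant test function $\phi\equiv 1$, and part (2) via the $T$-periodic ODE solution $\phi_0(t,x)=\exp\bigl(\int_0^t[a(s,x)-a_T(x)]\,ds\bigr)$, both plugged into the sup/inf characterisations after the standard approximation reduction. The only cosmetic difference is that you make the constant $C_0=\|R\|_\infty/\inf\phi_0$ explicit, where the paper simply asserts the existence of a suitable $D_\ep$.
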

\begin{proof}
By Proposition \ref{prop-approximation-08-31-18} and an approximating argument (as explained in Remark \ref{rem-2019-03-29}), we may assume, without loss of generality, that $\la_{1}(-L_D)$ is the principal eigenvalue of $-L_D$. Then, $\la_{1}(-L_D)=\la_{p}(-L_D)=\la_{p}'(-L_{D})$ due to Theorem \ref{characterization}. 

(1) Let $\la=-\max_{\R\times\ol{\Om}}a$ and $\phi\equiv1$. It is easy to check that $(L_{D}+\la)[\phi]=a+\la\leq0$, namely, $(\la,\phi)$ is a test pair for $\la_{p}(-L_D)$, and hence, $\la_{1}(-L_{D})=\la_{p}(-L_D)\geq-\max_{\R\times\ol{\Om}}a$.

Similarly, it is easy to check that $(\la',\phi')=(-\min_{\R\times\ol{\Om}}a,1)$ is a test pair for $\la_{p}'(-L_{D})$, namely, $(L_{D}+\la')[\phi']\geq0$. It follows that $\la_{1}(-L_{D})=\la_{p}(-L_D)\leq-\min_{\R\times\ol{\Om}}a$. 

(2) It is easy to check that for each $x\in\ol{\Om}$, the function 
$$
t\mapsto\phi(t,x):=e^{\int_{0}^{t}\left[a(s,x)-a_{T}(x)\right]ds},\quad t\in\R
$$ 
is a positive $T$-periodic solution of $\phi_{t}=a(t,x)\phi-a_{T}(x)\phi$. In particular, $\phi\in\XX^{++}$.  For $0<\ep\ll1$, set 
$$
\la_{\ep}^{\max}=-\max_{\ol{\Om}}a_{T}-\ep\quad\text{and}\quad\la_{\ep}^{\min}=-\min_{\ol{\Om}}a_{T}+\ep.
$$ 
Using the fact $\min_{[0,T]\times\ol{\Om}}\phi>0$, it is straightforward to check that for each $0<\ep\ll1$, there exists $0<D_{\ep}\ll1$ such that for each $D\in(0,D_{\ep})$, there hold
$(L_{D}+\la_{\ep}^{\max})[\phi]\leq0$ and $(L_{D}+\la_{\ep}^{\min})[\phi]\geq0$. This together with the definitions of $\la_{p}(-L_{D})$ and $\la_{p}'(-L_{D})$ and Theorem  \ref{characterization} ensure that for each $0<\ep\ll1$, there holds $\la_{\ep}^{\max}\leq \la_{1}(-L_D)\leq\la_{\ep}^{\min}$ for all $D\in(0,D_{\ep})$. This completes the proof.
\end{proof}

In the second lemma, we prove a Poincar\'e-type inequality of the operator $\MM:L^{2}(\Om)\to L^{2}(\Om)$ defined by 
$$
\M[f](x)=-\int_\O J(x-y)[f(y)-f(x)]dy, \quad x\in\Om,
$$
where $J$ is as in {\bf(H2)}.

\begin{lemma}\label{lem-poincare-inequality}
Let $J$ be as in {\bf(H2)} and it is symmetric with respect to each component.  There holds
$$
\int_{\Om}\MM[f](x)f(x)dx=\frac{1}{2}\int_{\O}\int_\O J(x-y)\left[f(y)-f(x)\right]^2dydx,
$$
and there exists $C>0$ such that
$$
\int_{\Om}\MM[f](x)f(x)dx\geq C\int_{\Om}f(x)^{2}dx
$$
for all $ f\in L^{2}(\Om)$ with $\int_{\Om}f(x)dx=0$.
\end{lemma}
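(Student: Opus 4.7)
The plan is to prove the two assertions separately. For the identity, I would write
\[
\int_{\Om}\MM[f](x)f(x)\,dx=-\int_{\Om}\int_{\Om}J(x-y)\bigl[f(y)-f(x)\bigr]f(x)\,dy\,dx,
\]
swap the roles of $x$ and $y$, and invoke the symmetry $J(z)=J(-z)$ coming from componentwise symmetry of $J$ to rewrite the same quantity with $f(y)$ in place of $f(x)$ and opposite sign in the bracket. Averaging the two expressions converts the cross term into $\tfrac12[f(y)-f(x)]^2$ and yields the identity. This step is purely algebraic.

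For the Poincar\'e-type inequality, I would argue by contradiction: assume no $C$ exists, so there is a sequence $f_n\in L^{2}(\Om)$ with $\int_{\Om}f_n\,dx=0$, $\|f_n\|_{L^2}=1$ and $\langle\MM f_n,f_n\rangle\to0$. Decompose $\MM=M_j-K$, where $j(x):=\int_{\Om}J(x-y)\,dy$ and $K[f](x):=\int_{\Om}J(x-y)f(y)\,dy$. The weight $j$ is continuous on the compact set $\ol{\Om}$ and bounded below by some $j_0>0$: indeed $J(0)>0$ together with continuity of $J$ gives $\de,c>0$ with $J\geq c$ on $B_\de(0)$, whence $j(x)\geq c\,|\Om\cap B_\de(x)|>0$ for every $x\in\ol{\Om}$. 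The operator $K$ is Hilbert--Schmidt, hence compact on $L^{2}(\Om)$, since $J$ is continuous and $\Om$ is bounded. Passing to a subsequence, $f_n\rightharpoonup f$ weakly in $L^{2}(\Om)$ with $\int_{\Om}f\,dx=0$, and $Kf_n\to Kf$ strongly, which gives $\int f_n Kf_n\,dx\to\int fKf\,dx$. Combined with $\int jf_n^{2}\,dx-\int f_n Kf_n\,dx\to0$, this yields $\lim_n\int jf_n^{2}\,dx=\int fKf\,dx$; by weak lower semicontinuity of $g\mapsto\int jg^{2}\,dx$ we then have $\int jf^{2}\,dx\leq\int fKf\,dx$, i.e.\ $\langle\MM f,f\rangle\leq0$, and since part one of the lemma shows $\MM$ is positive semidefinite, this forces $\langle\MM f,f\rangle=0$.

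From the identity and $\langle\MM f,f\rangle=0$, the nonnegative integrand $J(x-y)[f(y)-f(x)]^2$ vanishes a.e.; using $J(0)>0$ and continuity of $J$ to produce $r_0>0$ with $J>0$ on $B_{r_0}(0)$, together with connectedness of $\Om$ and a chaining argument via a finite cover of $\ol{\Om}$ by balls of radius $r_0/2$, one concludes that $f$ is a.e.\ constant, and the mean-zero constraint then forces $f\equiv0$. But then $\lim_n\int jf_n^{2}\,dx=\int fKf\,dx=0$ contradicts the pointwise bound $\int jf_n^{2}\,dx\geq j_0\|f_n\|_{L^2}^{2}=j_0>0$. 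The main obstacle is that $\MM$ itself is not compact, so strong convergence of $\{f_n\}$ cannot be extracted directly; the decomposition $\MM=M_j-K$ with $K$ compact and $j$ bounded below is exactly what lets the weak-convergence argument close.
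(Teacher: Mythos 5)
Your proof of the identity is the same algebraic computation as the paper's. For the Poincar\'e-type inequality, however, your route is genuinely different. The paper argues spectrally: after noting $\MM\geq0$ it writes $\MM=\CK^{2}$, invokes a result from Shen--Xie (their reference \cite{ShXi15-1}) asserting that $0$ is an \emph{isolated} algebraically simple eigenvalue of $\MM$, and then uses the orthogonal decomposition $L^{2}(\O)=\mathrm{span}\{1\}\oplus\{\int_\Om f=0\}$ together with invertibility of $\MM$ (hence of $\CK$) on the second summand to produce the constant $C$. You instead prove the needed spectral gap from scratch by a contradiction/compactness argument: decomposing $\MM=M_{j}-K$ with $j(x)=\int_\Om J(x-y)\,dy$ bounded below (since $J(0)>0$ and $\ol\Om$ is compact) and $K$ Hilbert--Schmidt hence compact, you extract a weak limit $f$ from a minimizing sequence, use compactness of $K$ to show $\langle\MM f,f\rangle=0$, then use the identity plus a chaining argument (based on $J>0$ near $0$ and connectedness of $\Om$) to force $f$ constant, hence $f\equiv0$ by the mean-zero constraint, and finally contradict $\int j f_{n}^{2}\geq j_{0}>0$. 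Both arguments are correct; the paper's is shorter but outsources the isolation of the zero eigenvalue to a cited reference, whereas yours is self-contained, essentially re-deriving that spectral fact in the form actually needed. The correct observation that $\MM$ itself is not compact (so strong subsequential convergence of $f_n$ is unavailable) is precisely why the decomposition $\MM=M_j-K$ is the right tool, and you identify this explicitly.
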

\begin{proof}
It is easy to check that $\MM$ is bounded and symmetric, hence, self-adjoint. Moreover,  for any $f\in L^{2}(\Om)$, we see from the symmetry of $J$ that
\begin{equation*}
\begin{split}
\lan\MM[f],f\ran_{L^{2}(\Om)}&=-\int_\O\int_\O J(x-y)f(y)f(x)dydx+\int_\O\int_\O J(x-y)f(x)^{2}dydx\\
&=-\int_\O\int_\O J(x-y)f(y)f(x)dydx+\frac{1}{2}\int_\O\int_\O J(x-y)f(x)^{2}dydx\\
&\quad+\frac{1}{2}\int_\O\int_\O J(x-y)f(y)^{2}dydx\\
&=\frac{1}{2}\int_{\O}\int_\O J(x-y)\left[f(y)-f(x)\right]^2dydx\geq0.
\end{split}
\end{equation*}
This says that $\MM$ is nonnegative. Then, there exists a unique bounded self-adjoint operator $\CK:L^{2}(\Om)\to L^{2}(\Om)$ such that $\MM=\CK^{2}$.

Obviously, $0$ is an eigenvalue of $\MM$ with constant functions on $\Om$ being eigenfunctions. Moreover, it is known from (see e.g \cite{ShXi15-1})  that $0$ is an isolated algebraically simple eigenvalue of $\M$. Thus, there holds the decomposition $L^2(\O)=E_1\oplus E_2$, where $E_1=\textrm{span$\{f\equiv1\}$}$ and $E_2=\{f\in L^2(\O):\int_\O fdx=0\}$. Moreover, $\M$ is invertible on $E_2$, which leads to the invertibility of $\CK$ on $E_{2}$. As a result, there is $C>0$ such that $\int_\O (\CK[f])^2dx\geq C\int_\O f^2dx$ for all $f\in E_{2}$. Since $\lan\MM[f],f\ran_{L^{2}}=\|\CK[f]\|_{L^{2}(\Om)}$, the lemma follows.
\end{proof}


Now, we prove Theorem \ref{thm-scaling-limit-eigenvalue1-Neumann}.

\begin{proof}[{\bf Proof of Theorem \ref{thm-scaling-limit-eigenvalue1-Neumann}}]
By Proposition \ref{prop-approximation-08-31-18} and an approximating argument, we may assume, without loss of generality, that $\la_{1}(-L_D)$ is the principal eigenvalue of $-L_D$. Moreover, we may assume, without loss of generality, that $\si=1$. Write $\la_{D}$ for $\la_{1}(-L_D)$ for simplicity. Let $\phi_{D}\in \XX^{++}$ satisfy the normalization $\|\phi_{D}\|_{L^2([0,T]\times\O)}=1$ and the eigen-equation
\begin{equation}\label{eigen_e}
L_{D}[\phi_{D}](t,x)+\la_{D}\phi_{D}(t,x)=0, \quad(t,x)\in\R\times\ol{\O}.
\end{equation}

(1) Dividing \eqref{eigen_e} by $\phi_D$ and integrating the resulting equation over $[0,T]\times\O$, we use the periodicity of $\phi_{D}$ to find
\begin{equation}\label{equality-2019-04-11}
D\int_{0}^{T}\int_{\Omega}\int_{\O}J(x-y)\left[\frac{\phi_{D}(t,y)}{\phi_{D}(t,x)}-1\right]dydxdt+\int_{0}^{T}\int_{\Omega}a(t,x)dxdt+T|\Omega|\lambda_{D}=0.
\end{equation}
The symmetry of $J$ ensures 
$$
\int_{\Om}\int_{\O}J\left( x-y\right)\left[\frac{\phi_{D}(t,y)}{\phi_{D}(t,x)}-1\right]dydx =\int_{\Om}\int_{\O}J(x-y)\left[\frac{\phi_{D}(t,x)}{\phi_{D}(t,y)}-1\right]dydx,\quad\forall t\in[0,T],
$$
which together with \eqref{equality-2019-04-11} yields
$$
\frac{D}{2}\int_{0}^T\int_{ \Omega}\int_{\O}J(x-y)\left[\sqrt{\frac{\phi_{D}(t,y)}{\phi_{D}(t,x)}}-\sqrt{\frac{\phi_{D}(t,x)}{\phi_{D}(t,y)}}\right]^2dydxdt+\int_{0}^{T}\int_{\Omega}a(t,x)dxdt+T|\Omega|\la_{D}=0.
$$
It follows that 
$$
\la_{D}\leq-\frac{1}{T|\Omega|}\int_{0}^{T}\int_{\Omega}a(t,x)dxdt.
$$

(2) By Theorem \ref{prop-principal-e-Neumann} and Lemma \ref{lem-boundedness-principal-eigenvalues} (2), for each $0<\ep\ll1$ there exists $D_{\ep}\in(0,1)$ such that
$$
-\max_{x\in\ol{\Om}}a_{T}(x)-\ep\leq\la_{D}\leq\min_{x\in\ol{\Om}}\left[D\int_{\Om}J\left(x-y\right) dy-a_{T}(x)\right],\quad\forall D\in(0,D_{\ep}).
$$
Letting $D\to0^{+}$, we find
$$
-\max_{x\in\ol{\Om}}a_{T}(x)-\ep\leq\liminf_{D\to0^{+}}\la_{D}\leq\limsup_{D\to0^{+}}\la_{D}\leq-\max_{x\in\ol{\Om}}a_{T}(x),\quad\forall0<\ep\ll1.
$$
The result follows.

(3) Multiplying \eqref{eigen_e} by $\phi_{D}$ and integrating the resulting equation over $[0,T]\times\O$, we find from the periodicity of $\phi_{D}$ and the normalization that
\begin{equation}\label{an-equality-090}
D\int_{0}^{T}\int_{\O}\left\{\int_\O J\left(x-y\right)\left[\phi_D(t,y)-\phi_D(t,x)\right]dy\right\}\phi_{D}(t,x)dxdt+\int_{0}^{T}\int_{\O}a(t,x)\phi_D^2(t,x)dxdt+\lambda_D=0.
\end{equation}
Calculations as in the proof of Lemma \ref{lem-poincare-inequality} using the symmetry of $J$ give
\begin{equation*}
\begin{split}
&\int_{0}^{T}\left\{\int_{\Om}\int_\O J\left(x-y\right)\left[\phi_D(t,y)-\phi_D(t,x)\right]^2dydx\right\}dt\\
&\quad\quad=-2\int_{0}^{T}\int_{\O}\left\{\int_\O J(x-y)\left[\phi_D(t,y)-\phi_D(t,x)\right]dy\right\}\phi_{D}(t,x)dxdt.
\end{split}
\end{equation*}
It then follows from \eqref{an-equality-090} that
\begin{equation*}
-\frac{D}{2}\int_{0}^{T}\int_{\O}\left\{\int_\O J(x-y)\left[\phi_D(t,y)-\phi_D(t,x)\right]^2dy\right\}dxdt+\int_{0}^{T}\int_{\O}a(t,x)\phi_D^2(t,x)dxdt+\lambda_D=0.
\end{equation*}
Since $a(t,x)$ is bounded and $\{\lambda_D\}_{D\gg1}$ is bounded by Lemma \ref{lem-boundedness-principal-eigenvalues} (1), there exists $C>0$ such that
\begin{equation}\label{estimate-08-07-18}
-\frac{D}{2}\int_{0}^{T}\int_{\O}\left\{\int_\O J(x-y)\left[\phi_D(t,y)-\phi_D(t,x)\right]^2dy\right\}dxdt\geq -C. 
\end{equation}

Define $\ol\phi_{ D}(t)=\frac{1}{|\O|}\int_\O\phi_D(t,x)dx$ for $t\in\R$ and set $\psi_{ D}=\phi_{ D}-\ol\phi_{D}$. Applying Lemma \ref{lem-poincare-inequality}, we find from \eqref{estimate-08-07-18} that
\begin{equation*}
\begin{split}
\int_{0}^{T}\int_{\Om}\MM[\psi_{D}(t,\cdot)](x)\psi_{D}(t,x)dxdt&=\frac{1}{2}\int_{0}^{T}\left\{\int_{\Om}\int_\O J(x-y)\left[\psi_D(t,y)-\psi_D(t,x)\right]^2dydx\right\}dt\\
&=\frac{1}{2}\int_{0}^{T}\left\{\int_{\Om}\int_\O J(x-y)\left[\phi_D(t,y)-\phi_D(t,x)\right]^2dydx\right\}dt\leq \frac{C}{D}.
\end{split}
\end{equation*}
Since $\int_\O\psi_D(t,x)dx=0$ for all $t\in[0,T]$, we can apply Lemma \ref{lem-poincare-inequality} to find 
$$
\int_{\Om}\psi_{D}^{2}(t,x)dx\leq C_{1}\int_{\Om}\MM[\psi_{D}(t,\cdot)](x)\psi_{D}(t,x)dx,\quad \forall t\in[0,T]
$$
for some $C_{1}>0$. Hence,

%

\begin{equation}\label{09.02.2}
\int_{0}^{T}\int_{\Om} \psi_{D}^2(t,x)dxdt\leq\frac{C_{1}C}{D}.
\end{equation}

Integrating \eqref{eigen_e} over $\O$ and dividing the resulting equation by $|\Om|$,  we find
\begin{equation*}
\begin{split}
\partial_t\ol\phi_D&=\frac{1}{|\O|}\int_\O\left\{D\int_\O J(x-y)\left[\phi_D(t,y)-\phi_D(t,x)\right]dy+a(t,x)\phi_D(t,x)+\lambda_D\phi_D(t,x)\right\}dx\\
&= \lambda_D\ol\phi_D+\frac{1}{|\O|}\int_\O a(t,x)\phi_D(t,x) dx.
\end{split}
\end{equation*}
Setting $\ol{a}(t)=\frac{1}{|\Om|}\int_{\Om}a(t,x)dx$, we find
\begin{equation*}
\partial_t\ol\phi_D-\left[\ol a(t)+\lambda_D\right]\ol\phi_D= \frac{1}{|\O|}\int_\O a(t,x)\left[\phi_D(t,x)- \ol\phi_D(t)\right]dx= \frac{1}{|\O|}\int_\O a(t,x)\psi_{D}(t,x)dx.
\end{equation*}
It follows from the variation of constants formula that
\begin{eqnarray*}
\ol\phi_D(t)=\ol\phi_D(0)e^{\int_0^t \left[\ol a(s)+\lambda_D\right]ds}+ \frac{1}{|\O|}\int_0^t e^{\int_{\tau}^{t} \left[\ol a(s)+\lambda_D\right]ds}\int_\O a(\tau,x)\psi_{D}(\tau,x)dxd\tau,\quad t\geq0.
\end{eqnarray*}
Since $a(t,x)$ and $\{\lambda_D\}_{D\gg1}$ are bounded, we deduce from \eqref{09.02.2} and H\"older's inequality that 
\begin{equation}\label{estimate-01-24-18}
\ol\phi_D(t)= \ol\phi_D(0)e^{\int_0^t \left[\ol a(s)+\lambda_D\right]ds}+O\left(\frac{1}{\sqrt{D}}\right),\quad \forall t\in[0,T]
\end{equation}
for all $D\gg1$. Since $\ol\phi_D(T)=\ol\phi_D(0)$, there must hold either $\ol\phi_D(0)\to0$ or $\int_0^T \left[\ol c(t)+\lambda_D\right]dt\to 0$ as $D\to\infty$. If $\ol\phi_D(0)\to0$ as $D\to\infty$, then \eqref{estimate-01-24-18} implies that $\ol\phi_D(t)\to0$ as $D\to\infty$ uniformly in $t\in[0,T]$. This together with \eqref{09.02.2} yields that $\phi_{ D}=\psi_{ D}+\ol\phi_{ D}$ converges in $L^2([0,T]\times\O)$ to $0$ as $D\to\infty$. However, $\|\phi_{D}\|_{L^2([0,T]\times\O)} =1$ for all $D\gg1$, which leads to a contradiction. Hence, there must hold $\int_0^T \left[\ol a(t)+\lambda_D\right]dt\to 0$ as $D\to\infty$, that is, 
$$
\lim_{D\to\infty}\lambda_D=-\frac{1}{T}\int_{0}^{T}\ol{a}(t)dt=-\frac{1}{T|\O|}\int_{0}^{T}\int_{\O} a(t,x)dxdt. 
$$
This completes the proof.
\end{proof}


\subsection{Effects of the dispersal range}

We study the effects of the dispersal range characterized by $\si$ on the principal spectrum point. To highlight the dependence on $\si>0$ and $k\geq0$, we write $L_{\si,k}$ for $L$. We prove Theorem \ref{limit-sigma}. 

\begin{proof}[\bf{Proof of Theorem \ref{limit-sigma}}]
By Proposition \ref{prop-approximation-08-31-18} and an approximating argument, we may assume, without loss of generality, that $\la_{1}(-L_{\si,k})$ is the principal eigenvalue of $-L_{\si,k}$.

(1) By Theorem \ref{prop-principal-e-Neumann} (2), we find 
$$
\la_{1}(-L_{\si,k})<\min_{x\in\ol{\Om}}\left[\frac{D}{\si^{k}}\int_{\Om}J\left(\frac{x-y}{\si}\right)\frac{1}{\si^{N}}dy-a_{T}(x)\right], 
$$
which implies 
$$
\limsup_{\si\to\infty}\la_{1}(-L_{\si,k})\leq-\max_{\ol{\Om}}a_{T}.
$$
It remains to show that 
\begin{equation}\label{to-prove-03-11-17}
\liminf_{\si\to\infty}\la_{1}(-L_{\si,k})\geq-\max_{\ol{\Om}}a_{T}.
\end{equation}
To do so, let us fix some constant $\phi_{0}>0$. It is easy to check that for each $x\in\ol{\Om}$, the function 
\begin{equation}
t\mapsto\phi(t,x):=e^{\int_{0}^{t}[a(s,x)-a_{T}(x)]ds}\phi_{0},\quad t\in\R\label{sollinear}
\end{equation}
is a positive $T$-periodic solution of the ODE $v_{t}=a(t,x)v-a_{T}(x)v$. Clearly, $\phi\in\XX_{\Om}^{++}$ and we may choose $\phi_{0}$ such that $\sup_{\R\times\ol\O}\phi=1$. For any $\delta>0$, we see that for each $(t,x)\in\R\times\ol{\Om}$,
\begin{equation}
\begin{split}
&\left(L_{\si,k}-\max_{\ol\Om}a_{T}-\delta\right)[\phi](t,x)\\
&\quad\quad=-\phi_{t}(t,x)+\frac{D}{\si^{k}}\left[\int_{\Om}J\left(\frac{x-y}{\si}\right)\frac{\phi(t,y)-\phi(t,x)}{\si^{N}}dy\right]+\left[a(t,x)-\max_{\ol\Om}a_{T}-\delta\right]\phi(t,x)\\
&\quad\quad\leq\frac{D}{\si^{k}}\left[\int_{\Om}J\left(\frac{x-y}{\si}\right)\frac{\phi(t,y)-\phi(t,x)}{\si^{N}}dy\right]-\delta\phi(t,x).
\end{split}\label{06.02.1}
\end{equation}
As $\min_{\R\times\ol{\Om}}\phi>0$ and $\left\|\frac{D}{\si^{k}}\int_{\Om}J\left(\frac{\cdot-y}{\si}\right)\frac{\phi(t,y)-\phi(t,\cdot)}{\si^{N}}dy\right\|_\infty\to0$ as $\si\to\infty$,
there is $\si_\delta>0$ such that 
$$
\left(L_{\si,k}-\displaystyle\max_{\ol\Om}a_{T}-\delta\right)[\phi]\leq0,\quad\forall\si\geq\si_\delta, 
$$
which implies that 
$$
\la_{1}(-L_{\si,k})=\la_{p}(-L_{\si,k})\geq-\max_{\ol\Om}a_{T}-\delta,\quad\forall \si\geq\si_\delta.
$$
The arbitrariness of $\delta>0$ then yields \eqref{to-prove-03-11-17}. Hence, the limit $\lim_{\si\to\infty}\la_{1}(-L_{\si,k})=-\max_{\ol{\Om}}a_{T}$ follows.

(2) For $k\in[0,1)$ and $x\mapsto a(t,x)$ is Lipschitz continuous, we first prove the inequality 
$$\liminf_{\si\to0^{+}}\la_{1}(-L_{\si,k})=\liminf_{\si\to0^{+}}\la_{p}(-L_{\si,k})\geq-\max_{\ol\Om}a_{T}.$$

 Let  $\ol\phi(t,x):=e^{\int_{0}^{t}\left[a(s,x)-a_{T}(x)\right]ds}$. Clearly, $x\mapsto\ol{\phi}(t,x)$ is Lipschitz continuous uniform in $t\in\R$, that is, there is $M>0$ such that
\begin{equation}\label{lip-cond-09-07-18}
\sup_{t\in\R}\left|\ol{\phi}(t,x)-\ol{\phi}(t,y)\right|\leq M|x-y|,\quad\forall x,y\in\ol{\Om}.
\end{equation}
For any $\ep>0$ and $(t,x)\in\R\times\ol{\Om}$, we have
\begin{equation*} 
\begin{split}
&\left(L_{\si,k}-\max_{\ol\Om}a_{T}-\ep\right)[\ol\phi](t,x) \\
&\quad\quad\leq \frac{D}{\si^{k}}\left\{\int_{\Om}J_{\si}(x-y)\left[\ol\phi(t,y)-\ol\phi(t,x)\right]dy\right\}-\ep\ol\phi(t,x)\\&\quad\quad=\frac{D}{\si^{k}}\left\{\int_{\Om}\frac{1}{\sigma^N}J\left(\frac{x-y}{\sigma}\right)\left[\ol\phi(t,y)-\ol\phi(t,x)\right]dy\right\}-\ep\ol\phi(t,x)\quad \quad \\
&\quad\quad=\frac{D}{\si^{k}}\int_{\frac{\O-x}{\sigma}}J(z)\left[\ol\phi(t,x+\sigma z)-\ol\phi(t,x)\right]dz-\ep\ol\phi(t,x)
\end{split}
\end{equation*}

By \eqref{lip-cond-09-07-18}, there holds
\begin{equation}
\left|\ol\phi(t,x+\si z)-\ol\phi(t,x)\right|\leq\si M|z|,\quad\forall x\in\ol{\Om},\quad z\in\frac{\Om-x}{\si}.\label{08.07.1}
\end{equation}
Hence,
\begin{equation*}
\begin{split}
\left(L_{\si,k}-\max_{\ol\Om}a_{T}-\ep\right)[\ol\phi](t,x)&\leq DM\si^{1-k}\int_{\frac{\Om-x}{\si}}J(z)|z|dz-\ep\ol\phi(t,x)\\
&<0, \quad \quad(t,x)\in\R\times\ol{\Om}
\end{split}
\end{equation*}
for all $0<\si\ll1$. The supremum characterization of $\lambda_p(-L_{\si,k})$ yields $\la_{1}(-L_{\si,k})\geq -\max_{\ol\Om}a_{T}-\epsilon$. Hence,
\begin{equation}\nonumber
\liminf_{\si\to0^{+}}\la_{1}(-L_{\si,k})=\liminf_{\si\to0^{+}}\la_{p}(-L_{\si,k})\geq-\max_{\ol\Om}a_{T}-\epsilon.
\end{equation}
The arbitrariness of $\epsilon$ implies
\begin{equation}\label{17.5.1}
\liminf_{\si\to0^{+}}\la_{1}(-L_{\si,k})=\liminf_{\si\to0^{+}}\la_{p}(-L_{\si,k})\geq-\max_{\ol\Om}a_{T}.
\end{equation}

Now let $k=0$, we shall prove the reverse inequality
\begin{equation}\label{17.5.1-03-12-17}
\limsup_{\si\to0^{+}}\la_{1}(-L_{\si})\leq-\max_{\ol\Om}a_{T}, 
\end{equation}
where
$$
 L_{\sigma}[\psi]=-\psi_{t}(t,x)+D\int_{{\O}}J_{\si}(x-y)(\psi(t,y)-\psi(t,x)) dy+ a(t,x)\psi(t,x).
$$ 

For any $\ep>0$, there exists an open ball of radius $\ep$ $B_\epsilon$  such that 
$a_T+\epsilon>\max_{\ol\O} a_T$ in $B_\epsilon\cap \O$. Let $\phi$ be a function defined as (\ref{sollinear}) such that $\sup_{\R\times\O}\phi=1$  and $\widetilde{\phi}_{\ep}:\R\times\R^{N}\to[0,\infty)$ be a $T$-periodic, continuous  function satisfying
\begin{equation*}
\widetilde{\phi}_{\ep}=\phi\,\,\text{in}\,\,\R\times\ol B_{\epsilon},\quad \widetilde{\phi}_{\ep}=0\,\,\text{in}\,\,\R\times(\R^{N}\bs B_{2\epsilon})\quad\text{and}\quad \sup_{\R\times\R^N}\widetilde{\phi}_{\ep}\leq \sup_{\R\times\R^N}\phi=1 .
\end{equation*}
Obviously, $\widetilde{\phi}_\epsilon(t,\cdot)\in C^4(\R^{N})$ for each $t\in\R$. In fact, we can assume, the approximation argument, $\phi$ is $C^{4}$, then $\widetilde{\phi}_\epsilon(t,\cdot)\in C^4(\R^{N})$ by its definition. We see
\begin{equation*}
\begin{split}
\mathcal{J}_{\sigma}(t,x):&=\int_{\R^{N}}J_{\si}(x-y)\left[\widetilde{\phi}_\epsilon(t,y)-\widetilde{\phi}_\epsilon(t,x)\right]dy\\
&=\int_{\R^N}J(z)\left[\widetilde\phi_\epsilon(t,x+\sigma z)-\widetilde{\phi}_\epsilon(t,x)\right]dz,
\end{split}
\end{equation*}
where the symmetry of $J$ with respect to each its component is used. By the fourth-order Taylor's expansion with remainder, we find
$$
\widetilde{\phi}_\epsilon(t,x+\si z)-\widetilde{\phi}_\epsilon(t,x)=\sum_{1\leq|\al|\leq 3}\frac{\partial^{\al}\widetilde{\phi}_\epsilon(t,x)}{\al!} \si^{|\al|}z^{\al}+\si^{4}\sum_{|\al|=4}R_{\al}(t,x)z^{\al},
$$
where $\al=(\al_{1},\dots,\al_{N})$ is the usual multiple index, and 
$$
R_{\al}(t,x)=\frac{4}{\al!}\int_{0}^{1}(1-s)^{3}\partial^{\al}\widetilde{\phi}_\epsilon(t,x+s\si z)ds.
$$
Since $J$ is symmetric with respect to each component, there hold $\int_{\R^N}J(z)z^{\al}dz=0$ for $|\al|=1$ or $3$ and $\int_{\R^N}J(z)z_{i}z_{j}dz=0$ for $i\neq j$. Therefore, 
\begin{equation*}
\mathcal{J}_{\sigma}(t,x)=\si^{2}\sum_{i=1}^{N}\frac{\partial_{x_{i}}^{2}\widetilde{\phi}_\epsilon(t,x)}{2} \int_{\R^{N}}J(z)z^{2}_{i}dz+\si^{4}\sum_{|\al|=4}R_{\al}(t,x)\int_{\R^{N}}J(z)z^{\al}dz.
\end{equation*}


Let $\wt\lambda_{p}(a,-\wt L^\sigma_\mathcal{O})$ be the principal eigenvalue of the operator $-\wt L^\sigma_\mathcal{O}$, where $\wt L^\sigma_\mathcal{O}$ is defined by
$$
\wt L^\sigma_\mathcal{O}[\psi]=-\psi_{t}(t,x)+D\int_{\mathcal{O}}J_{\si}(x-y)\psi(t,y)dy-D\psi+ a(t,x)\psi(t,x).
$$ 
Note that it is an operator of Dirichlet type.

Take $\ol\phi_\epsilon=\wt\phi_{\ep}=\phi$ in $\R^N\times \ol B_\epsilon$. For $(t,x)\in \R\times\ol B_\epsilon$, one has 
\begin{equation*}
\begin{split}
&\left(\wt L^\sigma_{B_\epsilon}-\max_{\ol{\Om}} a_{T}+\ep+\epsilon^{1/4}\right)[\ol\phi_\epsilon](t,x)\\
&\quad\quad=D\int_{B_{\ep}}J_{\si}(x-y)\ol\phi_\epsilon(t,y)dy-D\ol\phi_\epsilon(t,x)+\left[ a_T(x)-\max_{\ol{\Om}} a_{T}+\ep+\epsilon^{1/4}\right]\ol\phi_\epsilon(t,x)\\
&\quad\quad\geq D\left[\int_{B_\epsilon}J_{\si}(x-y)\wt\phi_\epsilon(t,y)dy-\wt\phi_\epsilon(t,x)\right]+\epsilon^{1/4}\wt\phi_\epsilon(t,x)\\
&\quad\quad=D\left[\int_{\R^N}J_{\si}(x-y)\wt\phi_\epsilon(t,y)dy-\wt\phi_\epsilon(t,x)-\int_{B_{2\epsilon}\setminus B_\epsilon}J_{\si}(x-y)\wt\phi_\epsilon(t,y)dy\right]+\epsilon^{1/4}\wt\phi_\epsilon(t,x)\\
&\quad\quad=D\mathcal{J}_{\sigma}(t,x)-D\int_{B_{2\epsilon}\setminus B_\epsilon}J_{\si}(x-y)\wt\phi_\epsilon(t,y)dy+\epsilon^{1/4}\wt\phi_\epsilon(t,x)\\
&\quad\quad=\mathcal{J}_{\sigma}^1(t,x)+\mathcal{J}_{\sigma}^2(t,x)+\mathcal{J}_{\sigma}^3(t,x) +\mathcal{J}_{\sigma}^4(t,x)
\end{split}
\end{equation*}
where
\begin{equation*}
\begin{split}
\mathcal{J}_{\sigma}^1(t,x)&=D\si^{2}\sum_{i=1}^{N}\frac{\partial_{x_{i}}^{2}\tilde{\phi}_\epsilon(t,x)}{2} \int_{\R^{N}}J(z)z^{2}_{i}dz,\\
\mathcal{J}_{\sigma}^2(t,x)&=-\frac{D}{\sigma^{N}}\int_{B_{2\epsilon}\setminus B_\epsilon}J\left(\frac{x-y}{\si}\right)\widetilde{\phi}_\epsilon(t,y)dy,\\
\mathcal{J}_{\sigma}^3(t,x)&=\epsilon^{1/4}\wt\phi_\epsilon(t,x)\quad\text{and}\\
\mathcal{J}_{\sigma}^4(t,x)&=D\si^{4}\sum_{|\al|=4}R_{\al}(t,x)\int_{\R^{N}}J(z)z^{\al}dz.
\end{split}
\end{equation*}
Since $\min_{\R\times\ol{\Om}}\phi>0$, one has $\min_{\R\times\ol B_\epsilon}\phi(t,x)>0$ uniformly in $\epsilon$. Choosing $\epsilon=\sigma^2$,  we find the following estimates hold 
$$
\sup_{\R\times\ol B_\epsilon}|\mathcal{J}_{\sigma}^1|\leq C_1\sigma^{2};\quad\sup_{\R\times\ol B_\epsilon}|\mathcal{J}_{\sigma}^2|\leq C_2\sigma^{N};\quad \inf_{\R\times\ol B_\epsilon}|\mathcal{J}_{\sigma}^3|\geq \sqrt{\sigma} C_3;\quad \sup_{\R\times\ol B_\epsilon}|\mathcal{J}_{\sigma}^4|\leq C_4\sigma^{4}.
$$
Indeed, the first, the third and the fourth ones are simple consequences of the fact that $\wt{\phi}_{\ep}=\phi$ on $\ol{B}_{\ep}$.  For the second one, it follows from the boundedness of $J$, $\wt\phi_\epsilon$ and the formula of a N-dimensional volume of a Euclidean ball of radius r 
$$V_N(r)=\frac{\pi^{N/2}}{\Gamma(\frac{N}{2}+1)}r^N,$$
where $\Gamma$ is the gamma function defined by  $\Gamma(N+\frac{1}{2})=(N-\frac{1}{2})(N-\frac{3}{2})...\frac{1}{2}.\pi^{\frac{1}{2}}.$

Since $N\geq1$, the term $\mathcal{J}_{\sigma}^3$ dominates all terms  $\mathcal{J}_{\sigma}^1$,  $\mathcal{J}_{\sigma}^2$, $\mathcal{J}_{\sigma}^4$ for $\sigma$ small enough.  Hence,  for $0<\sigma\ll1$, there holds
$$
\left(\wt L^\sigma_{B_{\sigma^2}}-\max_{\ol\Om}a_{T}+\sigma^2+ \sqrt{\sigma}\right)[\ol\phi_\epsilon]\geq0\quad\text{in}\quad \R\times \ol B_{\sigma^2}.
$$  
By Theorem  \ref{recal}, we have
$$
\wt\la_{p}(a,-\wt L^\sigma_{B_{\sigma^2}})=\wt\la_{p}'(a,-\wt L^\sigma_{B_{\sigma^2}})\leq -\max_{\ol\O}a_T+\sigma^2+ \sqrt{\sigma}.
$$ 
Proposition 6.1(2)\cite{ShenVo1} yields $\wt\la_{p}(a,-\wt L^\sigma_{\Omega})\leq \wt\la_{p}(a,-\wt L^\sigma_{B_{\sigma^2}})$ and thus
\begin{equation}\label{14.7.1}
\wt\la_{p}(a,-\wt L^\sigma_{\Omega})\leq -\max_{\ol\O}a_T+\sigma^2+ \sqrt{\sigma}.
\end{equation}
Let $\wt a^\sigma(t,x)=a(t,x)+D-D\int_{\frac{\Om-x }{\sigma}}J(z)dz$, obviously
\begin{equation}\label{14.7.2}
\lim_{\sigma\to 0}\sup_{t\in[0,T]}\|\wt a^\sigma(t,x)-a(t,x)\|_\infty=0,
\end{equation} 
and we derive, by Proposition 6.1(3),\cite{ShenVo1} that
\begin{equation}\label{14.7.3}
|\wt\la_{p}(\wt a^\sigma,-\wt L^\sigma_{\Omega})-\wt\la_{p}(a,-\wt L^\sigma_{\Omega})|\leq\sup_{t\in[0,T]}\|\wt a^\sigma(t,x)-a(t,x)\|_\infty,
\end{equation}
where   $\wt\la_{p}(\wt a^\sigma,-\wt L^\sigma_{\Omega})$, for $\sigma$ small enough,  is the principal eigenvalue of the operator
$$
\wt L^{\sigma}_\Omega[\psi]=-\psi_{t}(t,x)+D\int_{{\O}}J_{\si}(x-y)(\psi(t,y)-\psi(t,x)) dy+ a(t,x)\psi(t,x).
$$
Combining (\ref{14.7.1}), (\ref{14.7.2}), (\ref{14.7.3}), we pass to the limit as $\sigma\to0$ and get the desired inequality
 $$
\limsup_{\si\to0^{+}}\la_{1}(-L_{\si})= \limsup_{\sigma\to0}\wt\la_{p}(\wt a^\sigma,-\wt L^\sigma_{\Omega})\leq -\max_{\ol\O}a_T, 
 $$
which proves (\ref{17.5.1-03-12-17}).

\end{proof}


\section{{\bf Maximum principle}}

In this section, we prove maximum principle .

\begin{proof}[\bf Proof of Theorem \ref{thm-mp-1}]
Assume, without loss of generality, that $\si=1$. Let $\la_{1}=\la_{1}(-L)$ for simplicity and $\phi\in\XX^{++}$ be a principal eigenfunction of $-L$ associated to $\la_{1}$.

(1)$\implies$(2). Note that $L[\phi]=-\la_{1}\phi$. Since $\la_{1}\geq0$, (2) follows with $\vp=\phi$. 

(2)$\implies$(1). Let $\vp\in\XX^{++}$ be such that $L[\vp](t,x)\leq0$ for all $(t,x)\in\R\times\ol{\Om}$. Set $w=\frac{\vp}{\phi}$. Using the equation $L[\phi]+\la_{1}\phi=0$, it is easy to find that
\begin{equation*}
\begin{split}
L[\vp](t,x)&=L[w\phi](t,x)\\
&=-w_{t}(t,x)\phi(t,x)+D\int_{\Om}J\left(x-y\right)[w(t,y)-w(t,x)]\phi(t,y)dy-\la_{1}w(t,x)\phi(t,x).
\end{split}
\end{equation*}

Since $\min_{[0,T]\times\ol{\Om}}\vp>0$,  we have $\min_{[0,T]\times\ol{\Om}}w>0$. Let $(t_{0},x_{0})\in[0,T]\times\ol{\Om}$ be such that $w(t_{0},x_{0})=\min_{[0,T]\times\ol{\Om}}w$. We find that
\begin{equation*}
-w_{t}(t_{0},x_{0})\phi(t_{0},x_{0})=0\quad\text{and}\quad D\int_{\Om}J\left(x_{0}-y\right)[w(t_{0},y)-w(t_{0},x_{0})]\phi(t_{0},y)dy\geq0. 
\end{equation*} 
As $L[\vp](t_{0},x_{0})\leq0$, we find
$$
\la_{1}w(t_{0},x_{0})\phi(t_{0},x_{0})\geq-w_{t}(t_{0},x_{0})\phi(t_{0},x_{0})+D\int_{\Om}J\left(x_{0}-y\right)[w(t_{0},y)-w(t_{0},x_{0})]\phi(t_{0},y)dy\geq0.
$$
Since both $w(t_{0},x_{0})$ and $\phi(t_{0},x_{0})$ are positive, we conclude that $\la_{1}\geq0$.

(1)$\implies$(3). Let $\vp\in\XX_{\Om}$ be such that $L[\vp](t,x)<0$ for all $(t,x)\in\R\times\ol{\Om}$. Set $w=\frac{\vp}{\phi}$. Using the equation $L[\phi]+\la_{1}\phi=0$, it is easy to find that
\begin{equation*}
\begin{split}
L[\vp](t,x)&=L[w\phi](t,x)\\
&=-w_{t}(t,x)\phi(t,x)+D\int_{\Om}J\left(x-y\right)[w(t,y)-w(t,x)]\phi(t,y)dy-\la_{p}w(t,x)\phi(t,x).
\end{split}
\end{equation*}

Now, let us assume for contradiction that $\min_{[0,T]\times\ol{\Om}}\vp\leq0$. Then, $\min_{[0,T]\times\ol{\Om}}w\leq0$, and hence, there is $(t_{0},x_{0})\in[0,T]\times\ol{\Om}$ such that $w(t_{0},x_{0})=\min_{[0,T]\times\ol{\Om}}w$. It follows that 
\begin{equation*}
\begin{split}
-w_{t}(t_{0},x_{0})\phi(t_{0},x_{0})&=0,\\ 
D\int_{\Om}J\left(x_{0}-y\right)[w(t_{0},y)-w(t_{0},x_{0})]\phi(t_{0},y)dy&\geq0, \\
-\la_{1}w(t_{0},x_{0})\phi(t_{0},x_{0})&\geq0.
\end{split}
\end{equation*} 
Thus, $L_{\Om}[\vp](t_{0},x_{0})\geq0$, which leads to a contradiction.

(3)$\implies$(1). For contradiction, let us assume $\la_{1}<0$. Let $\Om_{0}\stst\Om$. The size of $\Om_{0}$ is to be specified. Let $\eta:\ol{\Om}\to[0,1]$ be a continuous function satisfying $\eta=1$ on $\Om_{0}$ and $\eta=0$ on $\partial\Om$. By the equality $L[\phi]+\la_{1}\phi=0$, we calculate
$$
L[\eta\phi](t,x)=D\int_{\Om}J\left(x-y\right)[\eta(y)-\eta(x)]\phi(t,y)dy-\la_{p}\eta(x)\phi(t,x).
$$
We consider three cases.
\begin{enumerate}
\item[(i)] If $x\in\ol{\Om}_{0}$, then 
$$
L[\eta\phi](t,x)=D\int_{\Om\sm\ol{\Om}_{0}}J\left(x-y\right)[\eta(y)-1]\phi(t,y)dy-\la_{p}\phi(t,x). 
$$
Since $\min_{\R\times[0,T]}\phi>0$, we deduce $L_{\Om}[\eta\phi](t,x)>0$ by simply choosing $\Om_{0}$ to be sufficiently close to $\Om$ so that the Lebesgue measure of $\Om\sm\ol{\Om}_{0}$ is sufficiently. 

\item[(ii)] If $x\in\ol{\Om}\sm\ol{\Om}_{0}$ and $\eta(x)\geq\frac{1}{2}$, then
$$
L[\eta\phi](t,x)\geq D\int_{\{y\in\Om:\eta(y)\leq\eta(x)\}}J\left(x-y\right)[\eta(y)-\eta(x)]\phi(t,y)dy-\frac{\la_{1}}{2}\phi(t,x).
$$
Note that in this case, there holds $\{y\in\Om:\eta(y)\leq\eta(x)\}\subset\Om\sm\ol{\Om}_{0}$. Therefore, choosing $\Om_{0}$ to be sufficiently close to $\Om$ ensures $L_{\Om}[\eta\phi](t,x)>0$.

\item[(iii)]  If $x\in\ol{\Om}\sm\ol{\Om}_{0}$ and $\eta(x)<\frac{1}{2}$, then
\begin{equation*}
\begin{split}
L[\eta\phi](t,x)&\geq D\int_{\ol{\Om}_{0}}J\left(x-y\right)[\eta(y)-\eta(x)]\phi(t,y)dy+D\int_{\Om\sm\ol{\Om}_{0}}J\left(x-y\right)[\eta(y)-\eta(x)]\phi(t,y)dy\\
&\geq\frac{D}{2}\int_{\ol{\Om}_{0}}J\left(x-y\right)\phi(t,y)dy+D\int_{\Om\sm\ol{\Om}_{0}}J\left(x-y\right)[\eta(y)-\eta(x)]\phi(t,y)dy.
\end{split}
\end{equation*}
Since $J(0)>0$, the integral $\frac{D}{2}\int_{\ol{\Om}_{0}}J\left(x-y\right)\phi(t,y)dy$ is uniformly positive for all $\Om_{0}$ sufficiently close to $\Om$. Choosing $\Om_{0}$ to be sufficiently close to $\Om$, we can make sure the term $D\int_{\Om\sm\ol{\Om}_{0}}J\left(x-y\right)[\eta(y)-\eta(x)]\phi(t,y)dy$ is sufficiently small, and hence, $L[\eta\phi](t,x)>0$.
\end{enumerate}

In conclusion, we can choose $\Om_{0}$ to be sufficiently close to $\Om$ to guarantee $L[\eta\phi](t,x)>0$ for all $(t,x)\in\R\times\ol{\Om}$. 

Since $\eta\phi\in\XX_{\Om}$, we apply (3) to $-\eta\phi$ to conclude that $-\eta\phi$ is strictly positive on $\R\times\ol{\Om}$, which leads to a contradiction.
\end{proof}

\begin{proof}[\bf Proof of Theorem \ref{thm-mp-2}]
Assume, without loss of generality, that $\si=1$. Let $\la_{1}=\la_{1}(-L)$ for simplicity and $\phi\in\XX^{++}$ be a principal eigenfunction of $-L$ associated to $\la_{p}$.

(1)$\implies$(2). Note that $L[\phi]=-\la_{1}\phi$. Therefore, if $\la_{1}>0$, (2) follows. 

(2)$\implies$(1). Let $\vp\in\XX^{+}$ be such that $L[\vp](t,x)<0$ for all $(t,x)\in\R\times\ol{\Om}$. Set $w=\frac{\vp}{\phi}$. Using the equation $L[\phi]+\la_{1}\phi=0$, it is easy to find that
\begin{equation*}
\begin{split}
L[\vp](t,x)&=L[w\phi](t,x)\\
&=-w_{t}(t,x)\phi(t,x)+D\int_{\Om}J\left(x-y\right)[w(t,y)-w(t,x)]\phi(t,y)dy-\la_{1}w(t,x)\phi(t,x).
\end{split}
\end{equation*}

We claim that $\inf_{[0,T]\times\ol{\Om}}\vp>0$. In fact, if $\min_{[0,T]\times\ol{\Om}}\vp=0$, then $\min_{[0,T]\times\ol{\Om}}w=0$. Let $(t_{0},x_{0})\in[0,T]\times\ol{\Om}$ be such that $w(t_{0},x_{0})=\min_{[0,T]\times\ol{\Om}}w$. We find that 
\begin{equation*}
\begin{split}
-w_{t}(t_{0},x_{0})\phi(t_{0},x_{0})&=0,\\ 
D\int_{\Om}J\left(x_{0}-y\right)[w(t_{0},y)-w(t_{0},x_{0})]\phi(t_{0},y)dy&\geq0, \\
-\la_{1}w(t_{0},x_{0})\phi(t_{0},x_{0})&=0.
\end{split}
\end{equation*}
It follows that $L_{\Om}[\vp](t_{0},x_{0})\geq0$, which leads to a contradiction.

Now, $\inf_{[0,T]\times\ol{\Om}}\vp>0$ implies that $\min_{[0,T]\times\ol{\Om}}w>0$. It then follows that 
$$
-w_{t}(t_{0},x_{0})\phi(t_{0},x_{0})=0\quad\text{and}\quad D\int_{\Om}J\left(x_{0}-y\right)[w(t_{0},y)-w(t_{0},x_{0})]\phi(t_{0},y)dy\geq0.
$$
Since $L[\vp](t_{0},x_{0})<0$, we find
$$
\la_{1}w(t_{0},x_{0})\phi(t_{0},x_{0})>-w_{t}(t_{0},x_{0})\phi(t_{0},x_{0})+D\int_{\Om}J\left(x_{0}-y\right)[w(t_{0},y)-w(t_{0},x_{0})]\phi(t_{0},y)dy\geq0.
$$
Since both $w(t_{0},x_{0})$ and $\phi(t_{0},x_{0})$ are positive, we conclude that $\la_{1}>0$.
\end{proof}

\section{Appendix}

The following is not the main result of this paper but it may be of the interest of a number of readers.
\begin{theorem}\label{cor1}
Suppose $J$ is radially symmetric. Let $k>2$.  Suppose $N\geq2$ and $a(t,x)=a(x)$.  Then, 
$$
\lim_{\si\to0^{+}}\lambda_1(-L_{\si,k})=-\frac{1}{|\Omega|}\int_{\O}a(x)dx.
$$ 
\end{theorem}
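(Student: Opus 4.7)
The strategy mirrors that of Theorem~\ref{thm-scaling-limit-eigenvalue1-Neumann}(3): since $k>2$, the effective dispersion rate $D\si^{2-k}$ blows up as $\si\to 0^+$, which should force the principal eigenfunction to homogenize and make the eigenvalue asymptotically determined by the spatial average of $a$. First I would reduce to the case where $\la_\si:=\la_1(-L_{\si,k})$ is a genuine principal eigenvalue via Proposition~\ref{prop-approximation-08-31-18} and Remark~\ref{rem-2019-03-29}. Since $a(t,x)=a(x)$ is time-independent and the principal eigenvalue is simple (Theorem~\ref{prop-principal-e-Neumann}(1)), the eigenfunction can be chosen time-independent: take $\phi_\si=\phi_\si(x)>0$ with $\|\phi_\si\|_{L^2(\Om)}=1$ solving the stationary equation
\begin{equation*}
\frac{D}{\si^k}\int_\Om \frac{1}{\si^N}J\!\left(\frac{x-y}{\si}\right)[\phi_\si(y)-\phi_\si(x)]\,dy + a(x)\phi_\si(x) + \la_\si\phi_\si(x)=0,\qquad x\in\ol\Om.
\end{equation*}

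The next step is to derive two integral identities. Integrating over $\Om$ and invoking the radial symmetry of $J$ (which makes the double integral of $[\phi_\si(y)-\phi_\si(x)]$ vanish), one obtains
\begin{equation*}
\la_\si = -\frac{\int_\Om a(x)\phi_\si(x)\,dx}{\int_\Om \phi_\si(x)\,dx}.
\end{equation*}
Multiplying the eigenequation by $\phi_\si$ and integrating, the symmetry of $J$ yields
\begin{equation*}
\frac{D}{2\si^k}\EE_\si(\phi_\si)=\int_\Om a(x)\phi_\si^2(x)\,dx+\la_\si,\qquad \EE_\si(u):=\int_\Om\!\int_\Om \frac{1}{\si^N}J\!\left(\frac{x-y}{\si}\right)[u(y)-u(x)]^2\,dy\,dx.
\end{equation*}
The two-sided bound $-\max_{\ol\Om}a\le\la_\si\le -|\Om|^{-1}\int_\Om a$ (the lower inequality by testing the definition of $\la_p$ against the constant function $1$, the upper by dividing the eigenequation by $\phi_\si$, integrating, and applying the same Jensen-type manipulation as in the proof of Theorem~\ref{thm-scaling-limit-eigenvalue1-Neumann}(1)) implies $\EE_\si(\phi_\si)\le C\si^k$, so that $\si^{-2}\EE_\si(\phi_\si)\le C\si^{k-2}\to 0$ as $\si\to 0^+$, which is the only place the hypothesis $k>2$ is used.

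Finally, I would appeal to the Bourgain--Brezis--Mironescu compactness theorem: the uniform $L^2$-bound $\|\phi_\si\|_{L^2}=1$ together with $\sup_\si \si^{-2}\EE_\si(\phi_\si)<\infty$ implies that $\{\phi_\si\}$ is precompact in $L^2(\Om)$, and any $L^2$-cluster point $\phi_0$ lies in $H^1(\Om)$ with $\|\nabla\phi_0\|_{L^2}=0$ by the lower semicontinuity part of the BBM formula; hence $\phi_0$ is constant, and positivity plus the normalization pin it down to $\phi_0\equiv|\Om|^{-1/2}$. Uniqueness of the cluster point gives $\phi_\si\to|\Om|^{-1/2}$ in $L^2(\Om)$, and passing to the limit in the first identity yields
\begin{equation*}
\lim_{\si\to 0^+}\la_\si=-\frac{1}{|\Om|}\int_\Om a(x)\,dx,
\end{equation*}
as claimed. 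The main obstacle is precisely the BBM-type compactness/nonlocal Poincar\'e step: one needs that a family $(u_\si)\subset L^2(\Om)$ with $\|u_\si\|_{L^2}=1$ and $\si^{-2}\EE_\si(u_\si)\to 0$ admits an $L^2$-cluster point that is constant on $\Om$. This is where the radial symmetry of $J$, the non-degeneracy $J(0)>0$, and the dimensional assumption $N\ge 2$ enter; it can either be quoted from the BBM/Ponce literature or reproved directly by combining a chaining/covering argument (exploiting $J\ge c>0$ on a fixed ball around $0$) with the classical Poincar\'e inequality on $\Om$.
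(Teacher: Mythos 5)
Your proposal is correct and follows essentially the same route as the paper: derive the energy identity $\frac{D}{2\si^{k}}\EE_\si(\phi_\si)=\int_\Om a\phi_\si^2\,dx+\la_\si$, use boundedness of $\{\la_\si\}$ to get $\si^{-2}\EE_\si(\phi_\si)\to 0$ when $k>2$, invoke Ponce's nonlocal Poincar\'e/compactness theorem (the ``BBM-type'' result you name; the paper cites Ponce, \emph{J.\ Eur.\ Math.\ Soc.}\ 2004, Theorem 1.2, and rewrites the kernel as $\rho_\si(x-y)/|x-y|^2$ with $\rho(x)=J(x)|x|^2/\int J|x|^2$ to put the estimate in the exact form that theorem requires) to conclude $\phi_\si$ converges in $L^2$ to a constant, and pass to the limit in the integrated eigenequation $\int_\Om[a+\la_\si]\phi_\si\,dx=0$. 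The only cosmetic differences are that the paper normalizes $\|\phi_\si\|_{L^\infty}=1$ rather than $\|\phi_\si\|_{L^2}=1$, and it makes the change of kernel from $J_\si$ to $\rho_\si$ explicit, which you left implicit when appealing to the BBM/Ponce literature.
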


 Although the limt is only obtained for the case $a$  independent of $t$, but we must use a deep compactness result of Ponce's \cite{Ponce1} for $N\geq2$. The case $N=1$ is more involved due to the lack of compactness result, we believe that this is a hard question and leave it for a future work. We also guess that when $a$ depends periodically on $t$,  $k>2$, the limit  will be
$$
\lim_{\si\to0^{+}}\lambda_1(-L_{\si,k})=-\frac{1}{|\Omega|T}\int_0^T\int_{\O}a(t,x)dxdt.
$$ 
Our technique can be applied immediately as soon as such a compactness result for time dependent operator is obtained.

\begin{proof}[Proof of Theorem \ref{cor1}]
 For fixed $k>2$. We write $L_{\si,k}$ as $L_{\si}$, and $\la_{1}(-L_{\si,k})$ as $\la_{\si}$. Since $a(t,x)=a(x)$, $L_\si$ is independent of $t$. Let $\phi_\si$ be a principal eigenfunction so that
\begin{equation}
\left\lbrace
\begin{array}{ll}
\displaystyle\frac{D}{\si^k}\int_{\Om}J\left(\frac{x-y}{\si}\right)\frac{\phi_\si(y)-\phi_\si(x)}{\si^N}dy+ a(x)\phi_\si(x)+\lambda_\si\phi_\si(x)=0, &x\in\ol{\O},\\
\|\phi_\si\|_{L^\infty(\O)}=1.\\
\end{array}
\right.\label{eigen_e1}
\end{equation}
By Theorem \ref{characterization}, we can take $(-\displaystyle\max_{\ol{\Om}}a,1)$ and $(-\displaystyle\min_{\ol{\Om}}a,1)$ to be the test eigenpairs for $\lambda_\si$. As a result, $\{\lambda_\si\}_{\si}$ is bounded. Let $\{\la_{\si_{n}}\}_{n}\subset\{\la_{\si}\}_{\si}$ be an arbitrary sequence. Due to the boundedness of $\{\la_{\si}\}_{\si}$, up to a subsequence, we may assume without loss of generality that $\la_{\si_{n}}\to\lambda_0$ as $n\to\infty$. We show that the limit  $\lambda_0=-\frac{1}{|\Omega|}\int_{\O}a(x)dx$ holds, and thus, the result follows.

Multiplying the equation in \eqref{eigen_e1} by $\phi_\si$ and integrating over  $\O$, we have
$$
\frac{D}{\si^{k}}\int_{ \Omega}\int_{\O}J\left( \frac{x-y}{\si}\right)\frac{\phi_\si(y)-\phi_\si(x)}{\si^{N}}\phi_\si(t,x)dydx+\int_{\Omega}\left[a(x)+\lambda_\si\right]\phi_\si^2 dx =0.
$$
Calculations as in the proof of Lemma \ref{lem-poincare-inequality} lead to
\begin{equation*}
\frac{D}{2\si^{N+k}}\int_{ \Omega}\int_{\O}J\left(\frac{x-y}{\si}\right)\left[\phi_\si(y)-\phi_\si(x)\right]^2dydx+\int_{ \Omega}\left[a(x)+\lambda_\si\right]\phi_\si^2 dx=0.
\end{equation*}
As a result, there is $M_{1}>0$ such that
\begin{equation*}
\int_{\O\times\Omega}J\left( \frac{x-y}{\si}\right)\frac{1}{\si^{N+2}}\left(\phi_\si(y)-\phi_\si(x)\right)^2dydx\leq M_1\si^{k-2}.
\end{equation*}

Let $\rho(x)=\frac{J(x)|x|^2}{\int_{\R^N}J(x)|x|^2dx}$ for $x\in\R^{N}$ and $\rho_\si=\frac{1}{\si^{N}}\rho(\frac{\cdot}{\si})$. Then,
\begin{equation*}
\begin{cases}
\rho_\si(x)\geq0, & x\in\R^N,\\
\displaystyle\int_{\R^N}\rho_\si(x)dx=1, &\forall\si>0,\\
\displaystyle\lim_{\si\to0}\int_{|x|\geq\delta}\rho_\si(x)dx=0, &\forall\delta>0,
\end{cases}
\end{equation*}
and
\begin{equation}\label{21.1.2}
\int_{\Om}\int_{\Omega}\rho_\si(x-y)\frac{|\phi_\si(y)-\phi_\si(x)|^2}{|x-y|^{2}}dydx\leq M_1\si^{k-2} .
\end{equation}
We apply  \cite[Theorem 1.2]{Ponce1} to conclude that $\{\phi_\si\}_{\si}$ is relatively compact in $L^2(\O)$ and there exists $\phi\in W^{1,2}(\O)$ such that up to a subsequence $\phi_\si\to\phi$ in $L^2(\O)$. Moreover, by letting $\si\to0$ in (\ref{21.1.2}) we obtain again by  \cite[Theorem 1.2]{Ponce1} that $\int_{\O}|\nabla \phi|^2dx=0$, which implies $\phi$ must be constant. By the normalization of $\phi_\si$, we get $\phi\equiv1$. 

On the other hand, integrating (\ref{eigen_e1}) over $\O$, one has $\int_{\O}[a(x)+\lambda_\si]\phi_\si dx=0$. Since $\phi_\si\to1$ in $L^2(\O)$, $a(x)$ and $\{\lambda_\si\}_\si$ is bounded, we can pass to the limit to obtain $\lambda_0=-\frac{1}{|\O|}\int_{\O}a(x)dx$.
\end{proof}






\begin{thebibliography}{99}

\bibitem{AM}
F. Andr\'es, J. Mu\~noz, A type of nonlocal elliptic problem: existence and approximation through a Galerkin-Fourier method, \textit{SIAM J. Math. Anal.} 47 (1) (2015) 498-525.


\bibitem{AMRT} 
F. Andreu, J. M. Maz\'on, J. D. Rossi, and J. Toledo, A nonlocal p-Laplacian evolution equation with Neumann boundary conditions, \textit{ J. Math. Pures Appl.} (9), 90 (2008), pp. 201-227.

\bibitem{BCV1} H. Berestycki, J. Coville and H.-H. Vo, On the definition and the properties of the principal eigenvalue of some nonlocal operators. \textit{J. Funct. Anal.} 271 (2016), no. 10, 2701-2751.

\bibitem{BCV2}
H. Berestycki, J Coville and H.-H. Vo,  Persistence criteria for populations with non-local dispersion. \emph{J. Math. Biol.} 72 (2016), no. 7, 1693-1745.

\bibitem{Bu88} R. B\"{u}rger, Perturbations of positive semigroups and applications to population genetics. \emph{Math. Z.} 197 (1988), no. 2, 259-272.


\bibitem{CL} C. Cosner and Y. Lou, Does movement toward better environments always benefit a population? \emph{J. Math. Anal. Appl.} 277 (2003), no. 2, 489-503.

\bibitem{Co10} J. Coville, A simple criterion for the existence of a major eigenfunction of some nonlocal operators. \emph{J. Differential Equations} 249 (2010), no. 11, 2921-2953.



\bibitem{Cor1} C. Cortazar, M. Elgueta, J.D Rossi and N. Wolanski, How to approximate the heat equation with Neumann boundary conditions by nonlocal diffusion problems. \textit{Arch. Ration. Mech. Anal.} 187 (2008), no. 1, 137-156. 


\bibitem{RDL} R. Cousens, C. Dytham and R. Law, \emph{Dispersal in Plants: A Population Perspective.} Oxford University Press, Oxford, 2008.




\bibitem{Hamilton} W. D. Hamilton  Extraordinary sex ratios. \emph{Science} 156 (3774):477-488,  1967.

\bibitem{HeNi} X. He and W.-M. Ni, The effects of diffusion and spatial variation in Lotka-"Volterra competition diffusion system I: Heterogeneity vs. homogeneity, \emph{J. Differential Equations} 254 (2013) 528-€"546.

\bibitem{HRSV}
F. Hamel, X. Ros-Oton, Y. Sire, E. Valdinoci: A one-dimensional symmetry result for a class of nonlocal semilinear equations in the plane. Ann. Inst. H. Poincaré Anal. Non Linéaire. 2016, doi: 10.1016/j.anihpc.2016.01.001

\bibitem{HL} R. Hambrock and Y. Lou,  The evolution of conditional dispersal strategies inspatially heterogeneous habitats. \emph{Bull. Math. Biol.} 71(8):1793-"1817. 



\bibitem{HSV08} V. Hutson, W. Shen and G. T. Vickers, Spectral theory for nonlocal dispersal with periodic or almost-periodic time dependence. \emph{Rocky Mountain J. Math.} 38 (2008), no. 4, 1147-1175.

\bibitem{HMKV} V. Hutson, S. Martinez, K. Mischaikow and G. T. Vickers,  The evolution of dispersal. \emph{J Math. Biol.} 47(6) (2003):483-€"517.

\bibitem{IN}
H. Ishii and G. Nakamura. A class of integral equations and approximation of p-Laplace equations. \emph{Calc. Var. PDE} 37 (2010), 485--522.

\bibitem{JW1}
S. Jarohs, T. Weth, Symmetry via antisymmetric maximum principles in nonlocal problems of variable order, Ann. Mat.
Pura Appl. 195.1 (4) (2016) 273?291.

\bibitem{JW2}
S. Jarohs, T. Weth, On the strong maximum principle for nonlocal operators, Math. Z. (2019), 81-111.

\bibitem{LHDGM} C. T. Lee, M. F. Hoopes, J. Diehl, W. Gilliland, G. Huxel, E. V. Leaver, K. McCain, J. Umbanhowar and A. Mogilner, Non-local concepts and models in biology. \emph{J. Theoret. Biol.} 210 (2001) 201-219.



\bibitem{LamLou} K.-Y. Lam and Y. Lou, Asymptotic behavior of the principal eigenvalue for cooperative elliptic systems and applications, \emph{J. Dynam. Differential Equations} 28 (2016) 29-€"48.


\bibitem{LLL} K.-Y. Lam, Y. Lou and F. Lutscher, Evolution of dispersal in closed advective environments, \emph{J. Biol. Dyn.} 9 (2015) 188-€"212.

\bibitem{LL} Y. Lou and F. Lutscher, Evolution of dispersal in open advective environments, \emph{J. Math. Biol.} 69 (2014) 1319-€"1342.

\bibitem{Lou} Y. Lou, On the effects of migration and spatial heterogeneity on single and multiple species. 
\emph{J. Differential Equations} 223 (2006), no. 2, 400-€"426.

\bibitem{Patrizi}
S. Patrizi, The Neumann problem for singular fully nonlinear operators. \emph{J. Math. Pures Appl.} 90 (2008) 286?311.


\bibitem{Ponce1} A. C. Ponce, An estimate in the spirit of Poincar\'e's inequality. \emph{J. Eur. Math. Soc.} 6 (2004), no. 1, 1-15. 


\bibitem{RS12} N. Rawal and W. Shen, Criteria for the existence and lower bounds of principal eigenvalues of time periodic nonlocal dispersal operators and applications. \emph{J. Dynam. Differential Equations} 24 (2012), no. 4, 927-954.
	
	
	
\bibitem{ShXi15-1} W. Shen and X. Xie, On principal spectrum points/principal eigenvalues of nonlocal dispersal operators and applications. \emph{Discrete Contin. Dyn. Syst.} 35 (2015), no. 4, 1665-1696.


\bibitem{ShXi15-2} W. Shen and X. Xie, Approximations of random dispersal operators/equations by nonlocal dispersal operators/equations. \emph{J. Differential Equations} 259 (2015), no. 12, 7375-7405.

\bibitem{ShenVo1} Z. Shen and H.-H.Vo,  Nonlocal dispersal equations in time-periodic media: Principal spectral theory, limiting properties and long-time dynamics  \emph{J. Differential Equations}, 267 (2019), no. 2, 1423?1466.


\bibitem{SLW} J.-W. Sun, W.-T. Li and Z.-C. Wang, The periodic principal eigenvalues with applications to the nonlocal dispersal logistic equation. \emph{J. Differential Equations} 263 (2017), no. 2, 934-971.

\bibitem{YLR}
FY. Yang, WT. Li, S. Ruan Dynamics of a nonlocal dispersal SIS epidemic model with Neumann boundary conditions. \emph{ J. Differential Equations} 267 (2019), no. 3, 2011-2051. 

\end{thebibliography}
\end{document}